
\PassOptionsToPackage{lowtilde}{url}
\documentclass[onefignum,onetabnum]{siamart171218}


\newboolean{ispreprint}
\setboolean{ispreprint}{false}
\setboolean{ispreprint}{true}
\usepackage{Local_definitions}

\ifpdf
\hypersetup{
	pdftitle={Intrinsic formulation of KKT conditions and CQs on smooth manifolds},
	pdfauthor={R.~Bergmann, and R.~Herzog}
}
\fi%


\begin{document}

\maketitle

\begin{abstract}
	Karush-Kuhn-Tucker (KKT) conditions for equality and inequality constrained
	optimization problems on smooth manifolds are formulated. Under the Guignard
	constraint qualification, local minimizers are shown to admit Lagrange
	multipliers. The linear independence, Mangasarian--Fromovitz, and Abadie
	constraint qualifications are also formulated, and the chain \eqq{LICQ implies
	MFCQ implies ACQ implies GCQ} is proved. Moreover, classical connections
	between these constraint qualifications and the set of Lagrange multipliers are
	established, which parallel the results in Euclidean space.
	The constrained Riemannian center of mass on the sphere serves as an illustrating numerical example.
\end{abstract}

\begin{keywords}
	nonlinear optimization,
	smooth manifolds,
	KKT conditions,
	constraint qualifications
\end{keywords}

\begin{AMS}
	90C30, 
	90C46, 
	49Q99, 
	65K05  
\end{AMS}

\section{Introduction}
\label{sec:Introduction}

We consider constrained, nonlinear optimization problems
\begin{equation}
	\label{eq:nlp}
	\left\{
		\quad
		\begin{aligned}
			\text{Minimize} \quad & f(\bp) \text{ w.r.t.\ } \bp \in \MM, \\
			\text{s.t.} \quad & g(\bp) \le 0, \\
			\text{and} \quad & h(\bp) = 0,
		\end{aligned}
	\right.
\end{equation}
where $\MM$ is a smooth manifold.
The objective $f\colon\MM \to \R$ and the constraint functions $g\colon \MM \to
\R^m$ and $h\colon\MM \to \R^q$ are assumed to be functions of class $C^1$. The
main contribution of this paper is the development of first-order necessary
optimality conditions in Karush-Kuhn-Tucker (KKT) form,
well known
when $\MM =
\R^n$, under appropriate constraint qualifications (CQs). Specifically, we
introduce and discuss analogues of the linear independence,
Mangasarian--Fromovitz, Abadie and Guignard CQ, abbreviated as LICQ, MFCQ, ACQ
and GCQ, respectively; see for instance \cite{Solodov2010}, \cite{Peterson1973}
or~\cite[Ch.~5]{BazaraaSheraliShetty2006}.

It is well known that KKT conditions are of paramount importance in nonlinear
programming, both for theory and numerical algorithms. We refer the reader to
\cite{Kjeldsen2000} for an account of the history of KKT condition in the
Euclidean setting $\MM = \R^n$. A variety of programming problems in numerous
applications, however, are naturally given in a manifold setting. Well-known
examples for smooth manifolds include spheres, tori, the general linear group
$\GL{n}$ of non-singular matrices, the group of special orthogonal (rotation)
matrices $\SO{n}$, the Grassmannian manifold of $k$-dimensional subspaces of a
given vector space, and the orthogonal Stiefel manifold of orthonormal
rectangular matrices of a certain size. We refer the reader to
\cite{AbsilMahonySepulchre2008} for an overview and specific examples.
Recently optimization on manifolds
has gained interest, e.g., in image processing, where methods like the cyclic
proximal point algorithm by~\cite{Bacak2014}, half-quadratic
minimization by~\cite{BergmannChanHielscherPerschSteidl2016}, and the parallel
Douglas-Rachford algorithm by~\cite{BergmannPerschSteidl2016} have been
introduced. They were then applied to variational models from imaging, i.e.,
optimization problems of the form \eqref{eq:nlp}, where the manifold is given
by the power manifold $\MM^N$ 
with 
$N$ being the number of data items or
pixels.
We emphasize that all of the above consider \emph{unconstrained} problems on manifolds.

In principle, inequality and equality constraints in \eqref{eq:nlp} might be
taken care of by considering a suitable submanifold of $\MM$ (with boundary).
This is much like in the case $\MM = \R^n$, where one may choose not to include
some of the constraints in the Lagrangian but rather treat them as abstract
constraints. Often, however, there may be good reasons to consider constraints
explicitly, one of them being that Lagrange multipliers carry sensitivity
information for the optimal value function, although this is not addressed in
the present paper.

To the best of our knowledge, a systematic discussion of constraint
qualifications and KKT conditions for~\eqref{eq:nlp} is not available in the
literature. We are aware of~\cite{Udriste1988} where KKT conditions are derived
for convex inequality constrained problems and under a Slater constraint
qualification on a complete Riemannian manifold. 
To be precise, the objective is convex along geodesics, and the feasible set is described by a finite collection of inequality constraints which are likewise geodesically convex.
The work closest to ours
is~\cite{YangZhangSong2014}, where KKT and also second-order optimality conditions
are derived for~\eqref{eq:nlp} in the setting of a smooth Riemannian manifold
and under the assumption of LICQ. Other constraint qualifications are not
considered. 
The emphasis of the present paper is on constraint qualifications and first-order necessary conditions of KKT type, but in contrast to \cite{YangZhangSong2014} we do not discuss second-order optimality conditions.
We also mention~\cite{LedyaevZhu2007} where a framework for
generalized derivatives of non-smooth functions on smooth Riemannian manifolds
is developed and Fritz--John type optimality conditions are derived as an
application.
Recently, a discussion of algorithms for equality and inequality constrained problems
on Riemannian manifolds was performed in~\cite{LiuBoumal2019_preprint}

The novelty of the present paper is the formulation of analogues for a range of
constraint qualifications (LICQ, MFCQ, ACQ, and GCQ) in the smooth manifold
setting. We establish the classical \eqq{LICQ implies MFCQ implies ACQ implies
GCQ} and prove that KKT conditions are necessary optimality conditions under
any of these CQs. We also show that the classical connections between these
constraint qualifications and the set of Lagrange multipliers continue to hold,
e.g., Lagrange multipliers are generically unique if and only if LICQ holds.
Finally, our work shows that the smooth structure on a manifold is a framework
sufficient for the purpose of first-order optimality conditions. In particular,
we do not need to introduce a Riemannian metric as in~\cite{YangZhangSong2014}.

We wish to point out that optimality conditions can also be derived
by considering $\MM$ to be embedded in a suitable ambient Euclidean space
$\R^N$. This approach requires, however, to formulate additional, nonlinear
constraints in order to ensure that only points in $\MM$ are considered
feasible. Another drawback of such an approach is that the number of variables
grows since $N$ is larger than the manifold dimension.
In contrast to the embedding approach, we formulate KKT conditions and
appropriate constraint qualifications (CQs) using \emph{intrinsic} concepts on
the manifold $\MM$. This requires, in particular, the generalization of the
notions of tangent and linearizing cones to the smooth manifold setting.
The intrinsic point of view is also the basis of many optimization approaches
for problems on manifolds; see for
instance~\cite{AbsilMahonySepulchre2008,AbsilBakerGallivan2007,Boumal2015}.

We also mention that since CQs and KKT conditions are local concepts, the results of tis paper can be stated and derived in a different way:
	one can transcribe \eqref{eq:nlp} locally into an optimization problem in Euclidean space  and subsequently apply the theory of CQs and KKT in $\R^n$.
	This leads to equivalent definitions and results.
	However we find it more instructive to formulate CQs and KKT conditions using the language of differential geometry and to minimize the explicit use of charts.

The material is organized as follows. In \cref{sec:Background_material} we
review the necessary background material on smooth manifolds. Our main results
are given in~\cref{sec:KKT_and_CQs}, where KKT conditions are formulated and
shown to hold for local minimizers under the Guignard constraint qualifications.
We also formulate further constraint qualifications (CQs) and establish
\eqq{LICQ implies MFCQ implies ACQ implies GCQ}.
\Cref{sec:CQs_and_Lagrange_multipliers} is devoted to the connections between
CQs and the set of Lagrange multipliers. In \cref{sec:Example} we present an
application of the theory.

\subsection*{Notation}

Throughout the paper, $\varepsilon$ is a positive number whose value may vary
from occasion to occasion. We distinguish between column vectors (elements of
$\R^n$) and row vectors (elements of $\R_n$).
Moreover, we recall that a subset $K$ of a vector space $V$ is said to be a \emph{cone} if $\alpha K \subseteq K$ for all $\alpha > 0$. A cone $K$ may or may not be convex.

\section{Background Material}
\label{sec:Background_material}

In this section we review the required background material on smooth manifolds.
We refer the reader to \cite{Spivak1979,Aubin2001,Lee2003,Tu2011,Jost2017} for a
thorough introduction.

\begin{definition}
	\label{definition:smooth_manifold}
	Suppose that $\MM$ is a Hausdorff, second-countable topological space~$\MM$.
	One says that $\MM$ can be endowed with a smooth structure of dimension $n \in \N$ 
	if there exists an arbitrary index set $A$, a collection of open subsets
	$\{U_\alpha\}_{\alpha \in A}$ covering $\MM$, together with a collection of
	homeomorphisms (continuous functions with continuous inverses)
	$\varphi_\alpha\colon U_\alpha \to \varphi_\alpha(U_\alpha) \subseteq \R^n$,
	such that the transition maps~$\varphi_\beta \circ
	\varphi_\alpha^{-1}\colon\varphi_\alpha(U_\alpha \cap U_\beta) \to
	\varphi_\beta(U_\alpha \cap U_\beta)$ are of class $C^\infty$ for all
	$\alpha, \beta \in A$. A pair $(U_\alpha,\varphi_\alpha)$ is called a smooth
	chart, and the collection $\AA \coloneqq\{(U_\alpha,\varphi_\alpha)\}_{\alpha \in A}$ is a
	smooth atlas.
	Then the pair $(\MM,\AA)$ is called a smooth manifold.
\end{definition}
Well-known examples of smooth manifolds include $\R^n$, spheres, tori, $\GL{n}$,
$\SO{n}$, the Grassmannian manifold of $k$-dimensional subspaces of a given
vector space, and the orthogonal Stiefel manifold of orthonormal rectangular
matrices of a certain size; see for instance~\cite{AbsilMahonySepulchre2008}.
From now on, a smooth manifold $\MM$ will always be equipped with a given smooth
atlas $\AA$. In particular, $\R^n$ will be equipped with the standard atlas consisting
of the single chart $(\R^n,\id)$. Points on $\MM$ will be denoted by bold-face
letters such as $\bp$ and $\bq$.

Notions beyond continuity are defined by means of charts. In particular, the
assumed $C^1$-property of the objective $f\colon\MM \to \R$ means that $f \circ
\varphi_\alpha^{-1}$, defined on the open subset $\varphi_\alpha(U_\alpha)
\subseteq \R^n$ and mapping into $\R$, is of class $C^1$ for every chart
$(U_\alpha,\varphi_\alpha)$ from the smooth atlas. The $C^1$-property of the
constraint functions $g$ and $h$ is defined in the same way. Similarly, one may
speak of $C^1$-functions which are defined only in an open subset $U \subset
\MM$, by replacing $U_\alpha$ by $U_\alpha \cap U$.

As is well known, tangential directions (to the feasible set) play a
fundamental role in optimization. Tangential directions at a point can be
viewed as derivatives of curves passing through that point. When $\MM = \R^n$,
these curves can be taken to be straight curves $t \mapsto \bp + t \, \bv$ of
arbitrary velocity $\bv \in \R^n$. This shows that $\R^n$ serves as its own
tangent space. An adaptation to the setting of a smooth manifold leads to the
following

\begin{definition}[Tangent space]
	\label{definition:tangent_space}
	\begin{enumerate}[label=$(\alph*)$,leftmargin=*,itemsep=\baselineskip]
		\item
			A function $\gamma\colon(-\varepsilon,\varepsilon) \to \MM$ is called a
			$C^1$-curve about~$\bp \in \MM$ if $\gamma(0) = \bp$ holds and
			$\varphi_\alpha \circ \gamma$ is of class $C^1$ for some (equivalently,
			every) chart $(U_\alpha,\varphi_\alpha)$ about~$\bp$.
		\item
			Two $C^1$-curves $\gamma$ and $\zeta$ about~$\bp \in \MM$ are said to be
			equivalent if
			\begin{equation}
				\label{eq:tangent_vector_geometric_definition_equivalence_of_curves}
				\restr{\frac{\d}{\dt} (\varphi_\alpha \circ \gamma)(t)}{t=0}
				=
				\restr{\frac{\d}{\dt} (\varphi_\alpha \circ \zeta)(t)}{t=0}
			\end{equation}
			holds for some (equivalently, every) chart $(U_\alpha,\varphi_\alpha)$
			about~$\bp$.
		\item
			Suppose that $\gamma$ is a $C^1$-curve about~$\bp \in \MM$ and that
			$[\gamma]$ is its equivalence class. Then the following linear map,
			denoted by $[\dot \gamma(0)]$ or $[\frac{\d}{\dt} \gamma(0)]$ and defined
			as
			\begin{equation}
				\label{eq:tangent_vector_generated_by_a_curve}
				[\dot \gamma(0)](f)
				\coloneqq
				\restr{\frac{\d}{\dt} (f \circ \gamma)}{t=0}
			\end{equation}
			takes $C^1$-functions $f\colon U \to \R$ defined in some open
			neighborhood $U \subseteq \MM$ of $\bp$ into $\R$. It is called the tangent
			vector to $\MM$ at $\bp$ along (or generated by) the curve $\gamma$.

		\item
			The collection of all tangent vectors at $\bp$, i.e.,
			\begin{equation}
				\label{eq:tangent_space_on_M}
				\tangentspaceM{\bp}
				\coloneqq
				\bigh\{\}{ [\dot \gamma(0)]\colon[\dot \gamma(0)]
				\text{ is generated by some $C^1$-curve $\gamma$ about $\bp$} }
				,
			\end{equation}
			is termed the tangent space to $\MM$ at $\bp$.
	\end{enumerate}
\end{definition}

\begin{remark}[Tangent space]
	\label{remark:tangent_space}
	\begin{enumerate}[itemsep=\baselineskip,leftmargin=*]
		\item
			We infer from \eqref{eq:tangent_vector_generated_by_a_curve} that the
			tangent vector $[\dot \gamma(0)]$ along the curve $\gamma$ about~$\bp$
			generalizes the notion of the directional derivative operator, acting on
			$C^1$-functions defined near~$\bp$.

		\item
			It can be shown that the tangent space $\tangentspaceM{\bp}$ to $\MM$ at
			$\bp$ is a vector space of dimension $n$ under the operations $\alpha
			\odot [\gamma] = [\alpha \odot \gamma]$ and $[\gamma] \oplus [\zeta] = [\gamma \oplus_\varphi \zeta]$, defined in terms of
			\begin{subequations}
				\label{eq:vector_space_operations_on_curves}
				\begin{align}
					\label{eq:multiplication_by_a_scalar_in_the_geometric_tangent_space}
					&
					\alpha \odot \gamma\colon t \mapsto \gamma(\alpha \, t) \in \MM
					\quad \text{for } \alpha \in \R
					,
					\\
					\label{eq:addition_in_the_geometric_tangent_space}
					&
					\gamma \oplus_{\varphi} \zeta\colon t \mapsto \varphi^{-1} \bigh(){(\varphi \circ \gamma)(t) + (\varphi \circ \zeta)(t) - \varphi(\bp)} \in \MM
				\end{align}
			\end{subequations}
			for arbitrary representatives of their respective equivalence classes. Here
			$\varphi$ is an arbitrary chart about~$\bp$, and its choice does
			not affect the definition of $[\gamma] \oplus [\zeta]$ although it does affect the definition of representative.
	\end{enumerate}
\end{remark}

Finally, we require the generalization of the notion of the derivative for
functions $f\colon\MM \to \R$.
\begin{definition}[Differential]
	\label{definition:differential}
	Suppose that $f\colon\MM \to \R$ is a $C^1$-function and $\bp \in \MM$.
	Then the following linear map, denoted by $(\d f)(\bp)$ and defined as
	\begin{equation}
		\label{eq:differential_of_a_real-valued_function}
		(\d f)(\bp) \, [\dot \gamma(0)]
		\coloneqq
		[\dot \gamma(0)](f)
	\end{equation}
	takes tangent vectors $[\dot \gamma(0)]$ into $\R$.
	It is called the differential of $f$ at $\bp$.
\end{definition}
By definition, the differential $(\d f)(\bp)$ of a real-valued function is a
cotangent vector, i.e., an element from the cotangent space
$\cotangentspaceM{\bp}$, the dual of the tangent space $\tangentspaceM{\bp}$. In
fact, every element of $\cotangentspaceM{\bp}$ is the differential of a
$C^1$-function $s$ at $\bp$. Therefore we denote, without loss of generality,
generic elements of $\cotangentspaceM{\bp}$ by $(\d s)(\bp)$.

\begin{remark}
	\label{remark:notation}
	In the literature on differential geometry the tangent space is usually denoted by $\TT_\bp \MM$ and the cotangent space by $\TT_\bp^* \MM$.
	Moreover the differential of a real-valued function $s$ at $\bp$ is written as $(\d s)_\bp$.
	We hope that our slightly modified notation is more intuitive for readers familiar with nonlinear programming notation.
	We also remark that \cref{definition:differential} easily generalizes to vector valued functions $g: \MM \to \R^m$ by applying \eqref{eq:differential_of_a_real-valued_function} component by component.
\end{remark}

In the following two sections, we are going to derive the KKT theory for
\eqref{eq:nlp} and associated constraint qualifications on smooth manifolds. We
wish to point out that the above notions from differential geometry are
sufficient for these purposes. In particular, we do not need to introduce a
Riemannian metric (a smoothly varying collection of inner products on the
tangent spaces), nor do we need to consider embeddings of $\MM$ into some $\R^N$
for some $N \ge n$. Moreover, we do not need to make further topological
assumptions such as compactness, connectedness, or orientability of $\MM$.

As we mentioned in the introduction, the subsequent results could be derived by transcribing \eqref{eq:nlp} locally into a problem in Euclidean space, using a chart.
This is due to the fact that this transformation leaves the notion of local minimum intact, as shown by the following lemma.
\begin{lemma}[compare \protect{\cite[Sec.~4.1]{YangZhangSong2014}}]
	\label{lemma:local_optimality_in_charts}
	Suppose that $(U,\varphi)$ is an arbitrary chart about~$\bp^*$.
	The following are equivalent:
	\begin{enumerate}[label=$(\alph*)$,leftmargin=*]
		\item
			$\bp^*$ is a local minimizer of \eqref{eq:nlp}.
		\item
			$\varphi(\bp^*)$ is a local minimizer of
			\begin{equation}
				\label{eq:nlp_in_charts}
				\left\{
					\quad
					\begin{aligned}
						\text{Minimize} \quad & (f \circ \varphi^{-1})(x) \text{ w.r.t.\ }  x \in \varphi(U) \subseteq \R^n \\
						\text{s.t.} \quad & (g \circ \varphi^{-1})(x) \le 0 \\
						\text{and} \quad & (h \circ \varphi^{-1})(x) = 0.
					\end{aligned}
				\right.
			\end{equation}
	\end{enumerate}
\end{lemma}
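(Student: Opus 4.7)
The plan is straightforward: the lemma is essentially a reformulation of the definition of local minimality via the homeomorphism $\varphi$, since feasibility and objective values are preserved pointwise under composition with $\varphi^{-1}$.

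First I would unwind the definition of a local minimizer on $\MM$: there exists an open neighborhood $V \subseteq \MM$ of $\bp^*$ such that $f(\bp) \ge f(\bp^*)$ for every feasible $\bp \in V$. Since $U$ is open and contains $\bp^*$, I may replace $V$ by $V \cap U$, which is still an open neighborhood of $\bp^*$ contained in $U$. Then $W \coloneqq \varphi(V \cap U)$ is an open subset of $\varphi(U) \subseteq \R^n$ containing $\varphi(\bp^*)$, because $\varphi$ is a homeomorphism.

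Next I would observe the one-to-one correspondence between points $\bp \in V \cap U$ and $x \in W$ via $x = \varphi(\bp)$, equivalently $\bp = \varphi^{-1}(x)$. Under this correspondence, feasibility is preserved: $g(\bp) \le 0$ and $h(\bp) = 0$ hold if and only if $(g \circ \varphi^{-1})(x) \le 0$ and $(h \circ \varphi^{-1})(x) = 0$, and likewise $f(\bp) = (f \circ \varphi^{-1})(x)$. Hence the inequality $f(\bp) \ge f(\bp^*)$ for all feasible $\bp \in V \cap U$ is equivalent to $(f \circ \varphi^{-1})(x) \ge (f \circ \varphi^{-1})(\varphi(\bp^*))$ for all $x \in W$ feasible for \eqref{eq:nlp_in_charts}, which is precisely the definition of $\varphi(\bp^*)$ being a local minimizer of \eqref{eq:nlp_in_charts}. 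The converse direction is symmetric: starting from an open neighborhood $W' \subseteq \varphi(U)$ of $\varphi(\bp^*)$ witnessing local minimality in the chart, the set $\varphi^{-1}(W')$ is an open neighborhood of $\bp^*$ in $\MM$ by continuity of $\varphi$.

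There is no genuine obstacle here; the only point requiring care is that the neighborhood $V$ from the definition of a local minimizer need not lie in $U$, which is handled by intersecting with $U$. Everything else is an immediate consequence of $\varphi$ being a homeomorphism and of the definitions of $f \circ \varphi^{-1}$, $g \circ \varphi^{-1}$, and $h \circ \varphi^{-1}$.
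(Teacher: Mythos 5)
Your proposal is correct and follows essentially the same route as the paper's proof: shrink the neighborhood from the definition of a local minimizer so that it lies in the chart domain $U$, use that $\varphi$ is a homeomorphism to obtain an open neighborhood of $\varphi(\bp^*)$, and note that feasibility and objective values are preserved under the correspondence $x = \varphi(\bp)$, with the converse handled symmetrically. Your write-up is merely somewhat more explicit about the pointwise correspondence of feasible points, which the paper leaves implicit.
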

\begin{proof}
	Suppose first that $\bp^* \in \Omega$ is a local minimizer of \eqref{eq:nlp}, i.e., there exists an open neighborhood $U_1$ of $\bp^*$ such that $f(\bp^*) \le f(\bp)$ holds for all $\bp \in U_1 \cap \Omega$.
	We can assume, by shrinking $U_1$ if necessary, that $U_1 \subseteq U$ holds.
	This implies $f(\varphi(\bp^*)) \le f(\varphi(\bp))$ for all $\bp \in U_1 \cap \Omega$.
	Since $\varphi(U_1)$ is an open neighborhood of $\varphi(\bp^*)$, $\varphi(\bp^*)$ is a minimizer of \eqref{eq:nlp_in_charts}.
	The converse is proved similarly.
\end{proof}
However, we are going to prefer working directly with \eqref{eq:nlp} using the language of differential geometry and minimize the explicit use of charts.

\section{KKT Conditions and Constraint Qualifications}
\label{sec:KKT_and_CQs}

In this section we develop first-order necessary optimality conditions in KKT form for \eqref{eq:nlp}.
To begin with, we briefly recall the arguments when $\MM = \R^n$; see for instance \cite[Chap.~12]{NocedalWright2006} or \cite[Chap.~2]{ForstHoffmann2010}.

\subsection{KKT Conditions in $\R^n$}
\label{subsec:KKT_and_CQs_in_Rn}

We define $\Omega \coloneqq \bigh\{\}{ x \in \R^n: g(x) \le 0, \; h(x) = 0 }$ to be the feasible set and associate with \eqref{eq:nlp} the Lagrangian
\begin{equation}
	\label{eq:Lagrangian_Rn}
	\LL(x,\mu,\lambda)
	\coloneqq
	f(x) + \mu \, g(x) + \lambda \, h(x)
	,
\end{equation}
where $\mu \in \R_m$ and $\lambda \in \R_q$.
Using Taylor's theorem, one easily shows that a local minimizer $x^*$ satisfies the necessary optimality condition
\begin{equation}
	\label{eq:non-negative_directional_derivatives_Rn}
	f'(x^*) \, d \ge 0
	\quad
	\text{for all } d \in \tangentconeRn{\Omega}{x^*}
	,
\end{equation}
where $\tangentconeRn{\Omega}{x^*}$ denotes the tangent cone,
\begin{equation}
	\label{eq:tangent_cone_on_Rn}
	\begin{aligned}
		\tangentconeRn{\Omega}{x^*}
		\coloneqq
		\Big\{ d \in \R^n:{} & \text{there exist sequences } (x_k) \subset \Omega, \; x_k \to x^*, \; (t_k) \searrow 0,
			\\
			&
		\text{such that } d = \lim_{k \to \infty} \frac{x_k - x^*}{t_k} \Big\}
		.
	\end{aligned}
\end{equation}
This cone is also known as contingent cone or the Bouligand cone; compare \cite{JimenezNovo2006,Penot1985}.
It is closed but not necessarily convex.
Since $\tangentconeRn{\Omega}{x^*}$ is inconvenient to work with, one introduces the linearizing cone
\begin{equation}
	\label{eq:linearizing_cone_Rn}
	\begin{aligned}
		\linearizingconeRn{\Omega}{x^*}
		\coloneqq
		\big\{ d \in \R^n: \;
			&
			g_i'(x^*) \, d \le 0 \quad \text{for all } i \in \AA(x^*),
			\\
			&
		h_j'(x^*) \, d = 0 \quad \text{for all } j = 1, \ldots, q \big\}
		.
	\end{aligned}
\end{equation}
Here $\AA(x^*) \coloneqq \big\{ 1 \le i \le m: g_i(x^*) = 0 \big\}$ is the index set of active inequalities at $x^*$.
Moreover, $\II(x^*) \coloneqq \{1, \ldots, m\} \setminus \AA(x^*)$ are the inactive inequalities.
It is easy to see that $\linearizingconeRn{\Omega}{x^*}$ is a closed convex cone and that $\tangentconeRn{\Omega}{x^*} \subseteq \linearizingconeRn{\Omega}{x^*}$ holds; see for instance \cite[Lem.~12.2]{NocedalWright2006}.

Using the definition of the polar cone of a set $B \subseteq \R^n$,
\begin{equation}
	\label{eq:polar_cone_of_any_set_Rn}
	B^\circ
	\coloneqq
	\{ s \in \R_n: s \, d \le 0 \text{ for all } d \in B \}
	,
\end{equation}
the first-order necessary optimality condition \eqref{eq:non-negative_directional_derivatives_Rn} can also be written as $-f'(x^*) \in \tangentconeRn{\Omega}{x^*}^\circ$.
Since the polar of the tangent cone is often not easily accessible, one prefers to work with $\linearizingconeRn{\Omega}{x^*}^\circ$ instead, which has the representation
\begin{equation}
	\label{eq:representation_polar_cone_of linearizing_cone_Rn}
	\begin{aligned}
		\linearizingconeRn{\Omega}{x^*}^\circ
		=
		\Big\{ s & = \sum_{i=1}^m \mu_i \, g_i'(x^*) + \sum_{j=1}^q \lambda_j \, h_j'(x^*), \\
		& \mu_i \ge 0 \text{ for } i \in \AA(x^*), \; \mu_i = 0 \text{ for } i \in \II(x^*), \; \lambda_j \in \R \Big\}
		\subseteq \R_n
		,
	\end{aligned}
\end{equation}
as can be shown by means of the Farkas lemma; compare \cite[Lem.~12.4]{NocedalWright2006}.
\ifthenelse{\boolean{ispreprint}}{We state it here in a slightly more general (yet equivalent) form than usual, where $V$ is a finite dimensional vector space and $A \in \LL(V,\R^q)$ is a linear map from $V$ into $\R^q$ for some $q \in \N$.
The adjoint of $A$, denoted by $A^*$, then belongs to $\LL(\R_q,V^*)$, where $V^*$ is the dual space of $V$.

\begin{lemma}[Farkas]
	\label{lemma:Farkas-Lemma}
	Suppose that $V$ is a finite dimensional vector space, $A \in \LL(V,\R^q)$ and $b \in V^*$.
	The following are equivalent:
	\begin{enumerate}[label=$(\alph*)$,leftmargin=*]
		\item
			The system $A^* y = b$ has a solution $y \in \R_q$ which satisfies $y \ge 0$.
		\item
			For any $d \in V$, $A \, d \ge 0$ implies $b \, d \ge 0$.
	\end{enumerate}
\end{lemma}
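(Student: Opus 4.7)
The plan is to prove the two implications separately, mirroring the classical argument for $V = \R^n$ and adapting it to the abstract finite-dimensional setting. The forward direction (a) $\Rightarrow$ (b) is a direct computation: if $y \ge 0$ solves $A^* y = b$ and $d \in V$ satisfies $A \, d \ge 0$, then
\[
b \, d = (A^* y) \, d = y \, (A \, d) \ge 0,
\]
since the pairing of a non-negative row vector in $\R_q$ with a non-negative column vector in $\R^q$ is non-negative.

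For the converse (b) $\Rightarrow$ (a), I would argue by contraposition and separation. Define the set
\[
K \coloneqq \{ A^* y \;:\; y \in \R_q, \; y \ge 0 \} \subseteq V^*.
\]
This is a convex cone, and as the image of the finitely generated non-negative orthant under the linear map $A^*$, it is the conic hull of the images of the $q$ unit coordinate row vectors. Assuming $b \notin K$, I would apply a separating hyperplane theorem to the point $b$ and the closed convex set $K$ in the finite-dimensional space $V^*$. Using $\dim V < \infty$ to identify the dual of $V^*$ with $V$, the separator takes the form of a vector $d \in V$ with $b \, d < 0$ and $(A^* y)(d) \ge 0$ for every $y \ge 0$. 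Rewriting the latter as $y \, (A \, d) \ge 0$ and letting $y$ range over the unit coordinate row vectors in $\R_q$ forces $A \, d \ge 0$. This contradicts (b), so $b \in K$, which is exactly the statement of (a).

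The main obstacle is establishing that $K$ is closed, which is what legitimises the application of the separating hyperplane theorem to $b$ and $K$. I would either invoke the classical Minkowski--Weyl fact that every finitely generated cone in a finite-dimensional vector space is closed, or reduce to $V = \R^n$ by fixing a basis: this turns $A$ into an $(q \times n)$-matrix and $K$ into the conic hull of finitely many fixed vectors in $\R_n$, whose closedness is the standard preliminary used in the usual Euclidean proof of the Farkas lemma, cf.\ \cite[Lem.~12.4]{NocedalWright2006}. No further ingredients beyond this closedness and the standard finite-dimensional separation theorem are needed.
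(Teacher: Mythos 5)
Your proof is correct, but it takes a more self-contained route than the paper, which does not prove this lemma at all: it simply states it as a ``slightly more general (yet equivalent)'' form of the classical Farkas lemma and points to \cite[Lem.~12.4]{NocedalWright2006}. The justification implicit in the paper is exactly your fallback option: fix a basis of $V$, identify $V \cong \R^n$ and $V^* \cong \R_n$, note that the adjoint becomes the transposed matrix, and invoke the Euclidean Farkas lemma verbatim; this is the cheapest argument and is all the paper needs, since the lemma is only used (in \cref{lemma:representation_polar_cone_of linearizing_cone_M}) with $V = \tangentspaceM{\bp}$, a finite-dimensional space on which one is anyway free to choose coordinates via a chart. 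Your primary argument instead reproves the lemma intrinsically in $V^*$: the forward implication by the adjoint identity $(A^*y)\,d = y\,(A\,d)$, and the converse by separating $b$ from the finitely generated cone $K = \{A^*y : y \ge 0\}$, using the closedness of finitely generated cones and the reflexivity $(V^*)^* \cong V$ in finite dimensions. This buys a basis-free, self-contained proof at the cost of importing the Minkowski--Weyl closedness fact and the separation theorem; the separation step is only sketched (one still has to use the cone structure of $K$ and $0 \in K$ to normalize the separating affine functional to $b\,d < 0 \le k\,d$ for all $k \in K$), but this is the standard argument and the gap is routine. Either route is acceptable; the paper's choice is merely the more economical one given that it already works with charts elsewhere.
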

}{}

Continuing our review, we notice that $\tangentconeRn{\Omega}{x^*} \subseteq \linearizingconeRn{\Omega}{x^*}$ entails $\linearizingconeRn{\Omega}{x^*}^\circ \subseteq \tangentconeRn{\Omega}{x^*}^\circ$, hence \eqref{eq:non-negative_directional_derivatives_Rn} does \emph{not} imply
\begin{equation}
	\label{eq:negative_derivative_belongs_to_polar_of_linearizing_cone_Rn}
	- f'(x^*) \in \linearizingconeRn{\Omega}{x^*}^\circ
	.
\end{equation}
This is where constraint qualifications come into play. The weakest, the Guignard qualification (GCQ), see \cite{Guignard1969:1}, requires the equality $\linearizingconeRn{\Omega}{x^*}^\circ = \tangentconeRn{\Omega}{x^*}^\circ$.
Realizing that \eqref{eq:negative_derivative_belongs_to_polar_of_linearizing_cone_Rn} is nothing but the KKT conditions,
\begin{subequations}
	\label{eq:KKT_conditions_Rn}
	\begin{align}
		\label{eq:KKT_conditions_1_Rn}
		&
		\LL_x(x^*,\mu,\lambda) = f'(x^*) + \mu \, g'(x^*) + \lambda \, h'(x^*) = 0
		,
		\\
		\label{eq:KKT_conditions_2_Rn}
		&
		h(x^*) = 0
		,
		\\
		\label{eq:KKT_conditions_3_Rn}
		&
		\mu \ge 0, \quad g(x^*) \le 0, \quad \mu \, g(x^*) = 0
		,
	\end{align}
\end{subequations}
we obtain the well known
\begin{theorem}
	\label{theorem:first-order_necessary_optimality_conditions_Rn}
	Suppose that $x^*$ is a local minimizer of \eqref{eq:nlp} for $\MM = \R^n$ and that the GCQ holds at $x^*$.
	Then there exist Lagrange multipliers $\mu \in \R_m$, $\lambda \in \R_q$, such that the KKT conditions \eqref{eq:KKT_conditions_Rn} hold.
\end{theorem}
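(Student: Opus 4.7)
The plan is to chain the three ingredients already assembled in this subsection, each of which carries essentially all of the work. First, since $x^*$ is a local minimizer of \eqref{eq:nlp} in $\R^n$, I would invoke the first-order necessary condition \eqref{eq:non-negative_directional_derivatives_Rn}, namely $f'(x^*)\,d \ge 0$ for all $d \in \tangentconeRn{\Omega}{x^*}$. This is the standard Bouligand-type condition, obtained by expanding $f$ to first order along a feasible sequence $(x_k) \subset \Omega$ with $x_k \to x^*$ and $(x_k - x^*)/t_k \to d$. Rewritten with the polar cone notation \eqref{eq:polar_cone_of_any_set_Rn}, it reads $-f'(x^*) \in \tangentconeRn{\Omega}{x^*}^\circ$.

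Second, I would apply the GCQ hypothesis, which by definition asserts $\tangentconeRn{\Omega}{x^*}^\circ = \linearizingconeRn{\Omega}{x^*}^\circ$, to transfer the above membership to $-f'(x^*) \in \linearizingconeRn{\Omega}{x^*}^\circ$. This is precisely \eqref{eq:negative_derivative_belongs_to_polar_of_linearizing_cone_Rn}.

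Third, I would apply the representation \eqref{eq:representation_polar_cone_of linearizing_cone_Rn} of $\linearizingconeRn{\Omega}{x^*}^\circ$. Unpacking the membership $-f'(x^*) \in \linearizingconeRn{\Omega}{x^*}^\circ$ yields multipliers $\mu_i \ge 0$ for $i \in \AA(x^*)$, $\mu_i = 0$ for $i \in \II(x^*)$, and $\lambda_j \in \R$ with
\begin{equation*}
  -f'(x^*) = \sum_{i=1}^m \mu_i\, g_i'(x^*) + \sum_{j=1}^q \lambda_j\, h_j'(x^*).
\end{equation*}
Rearranging gives the stationarity condition \eqref{eq:KKT_conditions_1_Rn}; the sign and complementarity conditions \eqref{eq:KKT_conditions_3_Rn} are read off from the active/inactive split together with $g(x^*)\le 0$; and \eqref{eq:KKT_conditions_2_Rn} is simply primal feasibility inherited from $x^* \in \Omega$.

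The only nontrivial content is the Farkas-type representation \eqref{eq:representation_polar_cone_of linearizing_cone_Rn} itself, which is the main technical step. I would derive it by splitting each equality $h_j'(x^*)\,d = 0$ into the pair of inequalities $\pm h_j'(x^*)\,d \le 0$, applying the Farkas lemma to the resulting homogeneous system, and then recombining the two nonnegative multipliers produced for each equation into a single unconstrained real multiplier $\lambda_j$. This is the classical argument cited from \cite[Lem.~12.4]{NocedalWright2006}; once it is in hand, the theorem is a routine assembly of the three ingredients above and requires no further analysis.
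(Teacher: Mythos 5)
Your proposal is correct and follows essentially the same route as the paper: the first-order condition $-f'(x^*) \in \tangentconeRn{\Omega}{x^*}^\circ$ via Taylor expansion along tangential sequences, the GCQ identity of polar cones, and the Farkas-based representation \eqref{eq:representation_polar_cone_of linearizing_cone_Rn} (including the standard splitting of equalities into $\pm$ inequalities and recombining multipliers, which is exactly how the paper handles it in its direct proof of the analogous manifold lemma). Nothing is missing.
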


In practice one of course often works with stronger constraint qualifications, which are easier to verify.
We are going to consider in \cref{subsec:CQs_on_M} the analogue of the classical chain $\text{LICQ} \; \Rightarrow \; \text{MFCQ} \; \Rightarrow \; \text{ACQ} \; \Rightarrow \; \text{GCQ}$ on smooth manifolds.

\subsection{KKT Conditions for Optimization Problems on Smooth Manifolds}
\label{subsec:KKT_on_M}

In this section we adapt the argumentation sketched in
\cref{subsec:KKT_and_CQs_in_Rn} to problem \eqref{eq:nlp}, where $\MM$ is a
smooth manifold. Our first result is the analogue of
\cref{theorem:first-order_necessary_optimality_conditions_Rn}, showing that the
GCQ renders the KKT conditions a system of first-order necessary optimality
conditions for local minimizers. For convenience, we summarize in
\cref{tab:summary_of_concepts} how the relevant quantities need to be translated
when moving from $\MM = \R^n$ to manifolds.
\begin{table}
	\centering
	\begin{tabular}{ll}
		\toprule
		$\MM = \R^n$
		&
		$\MM$ smooth manifold
		\\
		\midrule
		tangent space $\R^n$
		&
		tangent space $\tangentspaceM{\bp}$ \eqref{eq:tangent_vector_generated_by_a_curve}
		\\
		tangent cone $\tangentconeRn{\Omega}{x}$ \eqref{eq:tangent_cone_on_Rn}
		&
		tangent cone $\tangentconeM{\Omega}{\bp}$ \eqref{eq:tangent_cone_on_M}
		\\
		linearizing cone $\linearizingconeRn{\Omega}{x}$ \eqref{eq:linearizing_cone_Rn}
		&
		linearizing cone $\linearizingconeM{\Omega}{\bp}$ \eqref{eq:linearizing_cone_M}
		\\
		\midrule
		cotangent space $\R_n$
		&
		cotangent space $\cotangentspaceM{\bp}$
		\\
		derivative $f'(x) \in \R_n$
		&
		differential $(\d f)(\bp) \in \cotangentspaceM{\bp}$ \eqref{eq:differential_of_a_real-valued_function}
		\\
		polar cone $\subseteq \R_n$ \eqref{eq:representation_polar_cone_of linearizing_cone_Rn}
		&
		polar cone $\linearizingconeM{\Omega}{\bp}^\circ \subseteq \cotangentspaceM{\bp}$ \eqref{eq:representation_polar_cone_of linearizing_cone_M}
		\\
		\midrule
		Lagrange multipliers $\mu \in \R_m$, $\lambda \in \R_q$
		&
		same as for $\MM = \R^n$
		\\
		\bottomrule
		\\
	\end{tabular}
	\caption{Summary of concepts related to KKT conditions and constraint qualifications.}
	\label{tab:summary_of_concepts}
\end{table}

Let us denote by
\begin{equation}
	\label{eq:feasible_set_on_M}
	\Omega
	\coloneqq
	\bigh\{\}{ \bp \in \MM: g(\bp) \le 0, \; h(\bp) = 0 }
\end{equation}
the feasible set of \eqref{eq:nlp}.
As in $\R^n$, $\Omega$ is a closed subset of $\MM$ due to the continuity of $g$ and $h$.

A point $\bp^* \in \Omega$ is a local minimizer of \eqref{eq:nlp} if there exists a neighborhood $U$ of $\bp^*$ such that
\begin{equation*}
	f(\bp^*) \le f(\bp)
	\quad \text{for all } \bp \in U \cap \Omega
	.
\end{equation*}

The first notion of interest is the tangent cone at a feasible point.
In view of \eqref{eq:tangent_vector_generated_by_a_curve}, it may be tempting to consider
\begin{equation}
	\label{eq:classical_tangent_cone_on_M}
	\begin{aligned}
		\classicaltangentconeM{\Omega}{\bp}
		\coloneqq
		&
		\bigh\{.{ [\dot \gamma(0)] \in \tangentspaceM{\bp}: [\dot \gamma(0)] \text{ is generated by some $C^1$-curve}}
			\\
			&
			\quad
		\bigh.\}{
		\text{$\gamma$ about~$\bp$ which satisfies } \gamma(t) \in \Omega \text{ for all } t \in [0,\varepsilon) }
		.
	\end{aligned}
\end{equation}
In fact this is the analogue of what is known as the cone of attainable directions and it was used in the original works of~\cite{Karush1939,KuhnTucker1951}.
However, as is well known, this cone is, in general,
strictly smaller than the Bouligand tangent cone \eqref{eq:tangent_cone_on_Rn}
when $\MM = \R^n$; see for instance \cite{Penot1985,JimenezNovo2006},
\cite[Ch.~3.5]{BazaraaShetty1976} and \cite[Ch.~4.1]{AubinFrankowska2009}.

In order to properly generalize the Bouligand tangent cone
\eqref{eq:tangent_cone_on_Rn} to the smooth manifold setting, we
consider sequences rather than curves.
This leads to the following
\begin{definition}[(Bouligand) tangent cone]
	\label{definition:Bouligand_tangent_cone_on_M}
	Suppose that $\bp \in \Omega$ holds.
	\begin{enumerate}[label=$(\alph*)$,leftmargin=*,itemsep=\baselineskip]
		\item
			A tangent vector $[\dot \gamma(0)] \in \tangentspaceM{\bp}$ is called a tangent vector to $\Omega$ at $\bp$ if there exist sequences $(\bp_k) \subseteq \Omega$ and $t_k \searrow 0$ such that for all $C^1$-functions $f$ defined near~$\bp$, we have
			\begin{equation}
				\label{eq:tangent_vector_to_Omega}
				[\dot \gamma(0)](f)
				= 
				\lim_{k \to \infty} \frac{f(\bp_k) - f(\bp)}{t_k}
				.
			\end{equation}
			We refer to the sequence $(\bp_k,t_k)$ as a tangential sequence to $\Omega$ at $\bp$.

		\item
			The collection of all tangent vectors to $\Omega$ at $\bp$ is termed the (Bouligand) tangent cone to $\Omega$ at $\bp$ and denoted by 
			\begin{equation}
				\label{eq:tangent_cone_on_M}
				\tangentconeM{\Omega}{\bp}
				\coloneqq
				\{[\dot \gamma(0)] \in \tangentspaceM{\bp} : [\dot \gamma(0)] \text{ is a tangent vector to $\Omega$ at $\bp$} \}
				.
			\end{equation}
	\end{enumerate}
\end{definition}

The following proposition shows that \eqref{eq:tangent_cone_on_M} could also have been defined as a lifting via the chart differential of the classical tangent cone to the chart image of the feasible set near $\bp$.
	This was in fact used as the definition of the tangent cone in \cite[eq.~(3.7)]{YangZhangSong2014}.

\begin{proposition}
	\label{lemma:comparison_of_tangent_cone_on_M_with_tangent_cone_through_chart}
		Suppose that $\bp \in \Omega$, and let $(U,\varphi)$ be a chart about $\bp$.
		Then 
	
		\begin{equation}
			\label{eq:comparison_of_tangent_cone_on_M_with_tangent_cone_through_chart}
			\bigh(){(\d \varphi)(\bp)} \, \tangentconeM{\Omega}{\bp}
			=
			\tangentconeRn{\varphi(U \cap \Omega)}{\varphi(\bp)}
			.
		\end{equation}
\end{proposition}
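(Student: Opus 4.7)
The plan is to establish the two inclusions separately, using the chart $\varphi$ to transport tangential sequences on $\MM$ to tangential sequences in $\R^n$ and vice versa. Throughout, I would freely use that $(\d\varphi)(\bp)\colon \tangentspaceM{\bp} \to \R^n$ is a linear isomorphism whose components, by \cref{remark:notation}, are the differentials $(\d\varphi^i)(\bp) \,[\dot\gamma(0)] = [\dot\gamma(0)](\varphi^i) = \frac{\d}{\dt}(\varphi^i \circ \gamma)\big|_{t=0}$.

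For the inclusion ``$\subseteq$'', I would start with $[\dot\gamma(0)] \in \tangentconeM{\Omega}{\bp}$ and a corresponding tangential sequence $(\bp_k,t_k)$ as in \eqref{eq:tangent_vector_to_Omega}. Applying the defining identity \eqref{eq:tangent_vector_to_Omega} with the $C^1$-functions $f = \varphi^i$ (which are well-defined on $U$) for each $i=1,\ldots,n$, I obtain componentwise
\begin{equation*}
	\lim_{k\to\infty} \frac{\varphi(\bp_k) - \varphi(\bp)}{t_k}
	=
	(\d\varphi)(\bp)\,[\dot\gamma(0)].
\end{equation*}
Since $t_k \searrow 0$, this forces $\varphi(\bp_k) \to \varphi(\bp)$, so by the homeomorphism property $\bp_k \to \bp$, and hence $\bp_k \in U \cap \Omega$ for all sufficiently large~$k$. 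Setting $x_k \coloneqq \varphi(\bp_k) \in \varphi(U \cap \Omega)$, the displayed identity exhibits $(\d\varphi)(\bp)\,[\dot\gamma(0)]$ as an element of $\tangentconeRn{\varphi(U \cap \Omega)}{\varphi(\bp)}$.

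For the reverse inclusion ``$\supseteq$'', I would take $d \in \tangentconeRn{\varphi(U \cap \Omega)}{\varphi(\bp)}$ with an associated sequence $x_k \in \varphi(U \cap \Omega)$, $x_k \to \varphi(\bp)$, $t_k \searrow 0$ and $(x_k - \varphi(\bp))/t_k \to d$. The candidate tangent vector is constructed via the curve $\gamma(t) \coloneqq \varphi^{-1}(\varphi(\bp) + t\,d)$, defined for $|t|$ small enough that the argument stays in the open set $\varphi(U)$. By construction $\gamma(0) = \bp$, $\varphi \circ \gamma$ is affine and hence $C^1$, and $(\d\varphi)(\bp)\,[\dot\gamma(0)] = d$. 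Setting $\bp_k \coloneqq \varphi^{-1}(x_k) \in U \cap \Omega \subseteq \Omega$, it remains to verify that $(\bp_k,t_k)$ is a tangential sequence for $[\dot\gamma(0)]$ in the sense of \eqref{eq:tangent_vector_to_Omega}.

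The verification is the only mildly delicate step and is essentially the chain rule: for any $C^1$-function $f$ defined near $\bp$, I pass to $F \coloneqq f \circ \varphi^{-1}$, which is $C^1$ near $\varphi(\bp)$, and expand
\begin{equation*}
	\frac{f(\bp_k) - f(\bp)}{t_k}
	=
	F'(\varphi(\bp))\,\frac{x_k - \varphi(\bp)}{t_k} + \frac{o(\norm{x_k-\varphi(\bp)})}{t_k}.
\end{equation*}
Since $\norm{x_k - \varphi(\bp)}/t_k$ is bounded (it converges to $\norm{d}$), the remainder vanishes and the limit equals $F'(\varphi(\bp))\,d$. A direct computation using \eqref{eq:tangent_vector_generated_by_a_curve} gives $[\dot\gamma(0)](f) = \frac{\d}{\dt}(F(\varphi(\bp) + t\,d))\big|_{t=0} = F'(\varphi(\bp))\,d$ as well, so the two limits agree and $(\bp_k,t_k)$ is indeed a tangential sequence for $[\dot\gamma(0)]$. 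The main obstacle, such as it is, is a bookkeeping one: one must ensure at the outset that the sequences in both directions can be assumed to lie in the domain $U$ of the chart, which is immediate once convergence $\bp_k \to \bp$ is established. No further constraint-qualification-type assumption enters.
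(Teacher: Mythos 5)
Your proposal is correct and follows essentially the same route as the paper's own proof: both directions are handled by transporting tangential sequences through the chart, using the curve $\gamma(t) = \varphi^{-1}(\varphi(\bp) + t\,d)$ for the lifting direction and the test functions $\varphi^i$ (respectively $\varphi$) for the projection direction. Your write-up is in fact slightly more explicit than the paper's on two minor bookkeeping points — the Taylor-remainder estimate justifying the limit, and ensuring the sequence eventually lies in the chart domain $U$ — but these are refinements of the same argument, not a different approach.
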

\begin{proof}
		We divide the proof into two parts and first prove \eqq{$\supset$} in \eqref{eq:tangent_vector_to_Omega}.
		To this end, suppose that $d \in \tangentconeRn{\varphi(U \cap \Omega)}{\varphi(\bp)}$, i.e., there exist sequences $(x_k) \subset \varphi(U \cap \Omega)$, $x_k \to \varphi(\bp) \eqqcolon x$ and $t_k \searrow 0$, such that $d = \lim_{k \to \infty} (x_k - x)/t_k$; see \eqref{eq:tangent_cone_on_Rn}.
		Define $\bp_k \coloneqq \varphi^{-1}(x_k) \in U \cap \Omega$ and $\bp \coloneqq \varphi^{-1}(x) \in U \cap \Omega$.
		Then $\bp_k \to \bp$ since $\varphi^{-1}$ is continuous.
		Further, define a curve $\gamma$ via $\gamma(t) \coloneqq \varphi^{-1}(\varphi(\bp) + t \, d)$ for $\abs{t}$ sufficiently small.
		We show that $[\dot \gamma(0)]$ belongs to $\tangentconeM{\Omega}{\bp}$ by verifying \eqref{eq:tangent_vector_to_Omega}.
		To this end, let $f$ be an arbitrary $C^1$-function defined near $\bp$.
		Then we have
	\begin{equation*}
		[\dot \gamma(0)](f)
		=
		\restr{\frac{\d}{\dt} (f \circ \gamma)}{t=0}
		=
		\restr{\frac{\d}{\dt} \bigh(){(f \circ \varphi^{-1})(\varphi(\bp) + t \, d)}}{t=0}
		=
		(f \circ \varphi^{-1})'(\varphi(\bp)) \, d
	\end{equation*}
		by the definition of $\gamma$ and the chain rule.
		On the other hand, 
	\begin{equation*}
		\lim_{k \to \infty} \frac{f(\bp_k) - f(\bp)}{t_k}
		=
		\lim_{k \to \infty} \frac{(f \circ \varphi^{-1})(x_k) - (f \circ \varphi^{-1})(x)}{t_k}
		=
		(f \circ \varphi^{-1})'(\varphi(\bp)) \, d
	\end{equation*}
	holds, which proves \eqref{eq:tangent_vector_to_Omega} and thus $[\dot \gamma(0)] \in \tangentconeM{\Omega}{\bp}$.
		By \cref{definition:differential}, \cref{remark:notation}, \eqref{eq:tangent_vector_generated_by_a_curve} and the definition of $\gamma$, we have
	
	\begin{equation*}
		(\d \varphi)(\bp) \, [\dot \gamma(0)]
		=
		[\dot \gamma(0)](\varphi)
		=
		\restr{\frac{\d}{\dt} (\varphi \circ \gamma)}{t=0}
		=
		\restr{\frac{\d}{\dt} \bigh(){(\varphi \circ \varphi^{-1})(\varphi(\bp) + t \, d)}}{t=0}
		=
		d
		.
	\end{equation*}
	This confirms $d \in \bigh(){(\d \varphi)(\bp)} \, \tangentconeM{\Omega}{\bp}$ and thus the first part of the proof.

	For the reverse inequality \eqq{$\subset$}, we begin with an element $[\dot \gamma(0)] \in \tangentconeM{\Omega}{\bp}$ and an associated tangential sequence $(\bp_k,t_k)$ as in \eqref{eq:tangent_vector_to_Omega}.
		Again by \cref{definition:differential,remark:notation}, we obtain
	
	\begin{equation*}
		\bigh(){(\d \varphi)(\bp)} \, \tangentconeM{\Omega}{\bp}
		=
		[\dot \gamma(0)](\varphi)
		=
		\lim_{k \to \infty} \frac{\varphi(\bp_k) - \varphi(\bp)}{t_k}
	\end{equation*}
	and the limit exists by \eqref{eq:tangent_vector_to_Omega}.
	The sequence $\varphi(\bp_k, t_k)$ satisfies all the requirements to generate an element of $\tangentconeRn{\varphi(U \cap \Omega)}{\varphi(\bp)}$, compare \eqref{eq:tangent_cone_on_Rn}.
\end{proof}

\begin{remark}[Tangent cone]
	\label{remark:Bouligand_tangent_cone_on_M}
			The notion of tangent vectors to subsets of smooth manifolds can be traced back to \cite[Def.~2.1]{MotreanuPavel1982}, where they were called \emph{quasi-tangent} vectors and introduced, in our notation, as vectors $[\dot \gamma(0)] \in \tangentspaceM{\bp}$ satisfying
			\begin{equation*}
				\lim_{h \to 0} \frac{1}{h} \dist\bigh(){\varphi(\bp) + h \, (D\varphi)(\bp) \, [\dot \gamma(0)], \; \varphi(U \cap \Omega)} = 0
				.
			\end{equation*}
			Here $(U,\varphi)$ is a chart about $\bp$, $(D \varphi)(\bp)$ is the derivative (push-forward) of $\varphi$ at $\bp$, and $\dist$ denotes the (Euclidean) distance between a point and a set in $\R^n$.
			It is straightforward to show that this definition is equivalent to \eqref{eq:tangent_cone_on_M}.
			\ifthenelse{\boolean{ispreprint}}{However we explicitly utilize tangential sequences in the following, and particularly in \cref{lemma:properties_of_the_tangent_cone_on_M}, \cref{theorem:first-order_necessary_optimality_condition_on_M}, and \cref{lemma:relation_between_cones_M}.}{}
\end{remark}

\begin{lemma}[Properties of the tangent cone]
	\label{lemma:properties_of_the_tangent_cone_on_M}
	For any $\bp \in \Omega$, the tangent cone $\tangentconeM{\Omega}{\bp}$ is a closed cone in the tangent space $\tangentspaceM{\bp}$.
\end{lemma}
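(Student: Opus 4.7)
The plan is to split the statement into its two ingredients: (i) $\tangentconeM{\Omega}{\bp}$ is closed under positive scalar multiplication, and (ii) it is a closed subset of $\tangentspaceM{\bp}$.

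Part (i) follows directly from the definition by rescaling the tangential sequence. Given $[\dot\gamma(0)] \in \tangentconeM{\Omega}{\bp}$ with associated tangential sequence $(\bp_k,t_k)$ and any $\alpha > 0$, the pair $(\bp_k, t_k/\alpha)$ is again a feasible sequence satisfying $t_k/\alpha \searrow 0$, and for every $C^1$-function $f$ defined near~$\bp$,
\begin{equation*}
	\lim_{k \to \infty} \frac{f(\bp_k) - f(\bp)}{t_k/\alpha}
	=
	\alpha \, [\dot\gamma(0)](f)
	.
\end{equation*}
The limit operator coincides with $\alpha \odot [\dot\gamma(0)]$, which is a bona fide tangent vector by the vector space structure on $\tangentspaceM{\bp}$ described in \cref{remark:tangent_space}. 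Hence $\alpha \odot [\dot\gamma(0)] \in \tangentconeM{\Omega}{\bp}$, proving the cone property.

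For part (ii), the most economical route is to invoke \cref{lemma:comparison_of_tangent_cone_on_M_with_tangent_cone_through_chart}. Fix any chart $(U,\varphi)$ about~$\bp$. That lemma yields
\begin{equation*}
	\tangentconeM{\Omega}{\bp}
	=
	\bigl((\d\varphi)(\bp)\bigr)^{-1} \tangentconeRn{\varphi(U \cap \Omega)}{\varphi(\bp)}
	.
\end{equation*}
Since $(\d\varphi)(\bp)$ is a linear isomorphism between the finite-dimensional vector spaces $\tangentspaceM{\bp}$ and $\R^n$, it is a homeomorphism with respect to the natural topology on $\tangentspaceM{\bp}$ (which is well defined, independent of the chart). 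The Euclidean Bouligand tangent cone $\tangentconeRn{\varphi(U \cap \Omega)}{\varphi(\bp)}$ is closed in $\R^n$ by standard results (see, e.g., \cite[Ch.~4.1]{AubinFrankowska2009}), so its preimage under the homeomorphism is closed, as required.

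The main point requiring care is the meaning of \emph{closed} in $\tangentspaceM{\bp}$, since the manifold itself carries no global linear or metric structure; this is resolved by the fact that the tangent space at a single point is an $n$-dimensional real vector space whose topology is intrinsically defined (any two chart identifications with $\R^n$ differ by a linear isomorphism). An alternative, more self-contained argument via a diagonal extraction on tangential sequences is certainly possible, but it essentially reproduces the closedness proof in $\R^n$ through a chart and therefore offers no conceptual gain over the above route.
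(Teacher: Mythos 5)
Your proposal is correct and takes essentially the same route as the paper: the paper's proof also deduces both the cone property and closedness from \cref{lemma:comparison_of_tangent_cone_on_M_with_tangent_cone_through_chart}, using that $\tangentconeRn{\varphi(U \cap \Omega)}{\varphi(\bp)}$ is a closed cone in $\R^n$ and that $(\d \varphi)(\bp)$ is a bijective linear map. Your direct rescaling of the tangential sequence for the cone property merely reproduces (in corrected form, with $t_k/\alpha$ rather than $\alpha\,t_k$) the sequence-based argument the paper gives as an alternative, so there is no substantive difference.
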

\begin{proof}
	The result follows immediately from \cref{lemma:comparison_of_tangent_cone_on_M_with_tangent_cone_through_chart} since $\tangentconeRn{\varphi(U \cap \Omega)}{\varphi(\bp)}$is a closed cone in $\R^n$ and $(\d \varphi)(\bp)$ is a bijective, linear map between the vector spaces $\tangentspaceM{\bp}$ and $\R^n$.
	\ifthenelse{\boolean{ispreprint}}{%
			However, we also give a direct proof here.
			Suppose that $[\dot \gamma(0)]$ is an element of the tangent cone $\tangentconeM{\Omega}{\bp}$, associated with the tangential sequence $(\bp_k,t_k)$ as in \eqref{eq:tangent_vector_to_Omega}.
				Let $\alpha > 0$.
			It is easy to see that the curve $\alpha \odot \gamma$ generates $\alpha \, [\dot \gamma(0)]$ and that it is associated with the tangential sequence $(\bp_k,\alpha \, t_k)$.
			This shows that $\tangentconeM{\Omega}{\bp}$ is a cone.

			Let us now confirm that $\tangentconeM{\Omega}{\bp}$ is closed in $\tangentspaceM{\bp}$.
			To this end, consider a sequence $[\dot \gamma_\ell(0)]$ of tangent vectors to $\Omega$ at $\bp$ which converges to a tangent vector $[\dot \gamma(0)] \in \tangentspaceM{\bp}$.
			Each $[\dot \gamma_\ell(0)]$ is associated with a tangential sequence $(\bp_{k,\ell},t_{k,\ell})$, $k \in \N$.
			We need to show that the limit $[\dot \gamma(0)]$ is also associated with a tangential sequence.
			To this end, fix an arbitrary chart $\varphi$ about $\bp$.
			Then by definition, there exist vectors $d_\ell \in \R^n$ such that $(\varphi(\bp_{k,\ell})-\varphi(\bp))/t_{k,\ell} \to d_\ell = [\dot \gamma_\ell(0)](\varphi)$ holds as $k \to \infty$.
			By assumption, $d_\ell \to d \coloneqq [\dot \gamma(0)](\varphi)$ holds.
			Let us now construct a tangential sequence associated with $[\dot \gamma(0)]$.
			For every $\ell \in \N$, we can select an index $k(\ell)$ such that
			\begin{equation*}
				\bigabs{\varphi(\bp_{k(\ell),\ell}) - \varphi(\bp)}_2 \le \frac{1}{\ell},
				\quad
				0 < t_{k(\ell),\ell} \le \frac{1}{\ell},
				\quad
				\Bigabs{ \frac{\varphi(\bp_{k(\ell),\ell}) - \varphi(\bp)}{t_{k(\ell),\ell}}  - d_\ell}_2 \le \frac{1}{\ell}
			\end{equation*}
			holds.
			Consider now the \eqq{diagonal} sequence $(\widehat \bp_\ell, \widehat t_\ell) \coloneqq (\bp_{k(\ell),\ell},t_{k(\ell),\ell})$.
			Obviously $\widehat \bp_\ell$ belongs to $\Omega$, $\widehat t_\ell \searrow 0$ holds and 
			\begin{equation*}
				\Bigabs{\frac{\varphi(\widehat \bp_\ell)-\varphi(\bp)}{\widehat t_\ell} - d}_2
				\le
				\Bigabs{\frac{\varphi(\widehat \bp_\ell)-\varphi(\bp)}{\widehat t_\ell} - d_\ell}_2
				+
				\bigabs{d_\ell - d}_2
				\to 0 \text{ as } \ell \to \infty.
			\end{equation*}
			This shows that 
			\begin{equation*}
				d = [\dot \gamma(0)](\varphi) = \lim_{\ell \to \infty} \frac{\varphi(\widehat \bp_\ell)-\varphi(\bp)}{\widehat t_\ell},
			\end{equation*}
			which is \eqref{eq:tangent_vector_to_Omega} with $f = \varphi$. 
				It remains to confirm that \eqref{eq:tangent_vector_to_Omega} actually holds for all $C^1$-function $f$ defined near~$\bp$.
			However this follows easily by the chain rule as in the proof of \cref{lemma:comparison_of_tangent_cone_on_M_with_tangent_cone_through_chart}.
		}{}
	\end{proof}

The analogue of \eqref{eq:non-negative_directional_derivatives_Rn} is the following
\begin{theorem}[First-order necessary optimality condition]
	\label{theorem:first-order_necessary_optimality_condition_on_M}
	Suppose that $\bp^* \in \Omega$ is a local minimizer of \eqref{eq:nlp}.
	Then we have
	\begin{equation}
		\label{eq:non-negative_directional_derivatives_M}
		[\dot \gamma(0)](f) \ge 0
	\end{equation}
	for all tangent vectors $[\dot \gamma(0)] \in \tangentconeM{\Omega}{\bp^*}$.
\end{theorem}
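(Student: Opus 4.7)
The plan is to imitate the standard Euclidean proof, exploiting the sequence-based description of the tangent cone offered by \cref{definition:Bouligand_tangent_cone_on_M}. Given an arbitrary $[\dot\gamma(0)] \in \tangentconeM{\Omega}{\bp^*}$, I select an associated tangential sequence $(\bp_k, t_k)$ as in \eqref{eq:tangent_vector_to_Omega}, so that $(\bp_k) \subseteq \Omega$, $t_k \searrow 0$, and
\begin{equation*}
  [\dot\gamma(0)](f) \;=\; \lim_{k\to\infty} \frac{f(\bp_k) - f(\bp^*)}{t_k}.
\end{equation*}

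First I would verify that $\bp_k \to \bp^*$ in the topology of $\MM$. Fix any chart $(U,\varphi)$ about $\bp^*$ and apply the defining relation \eqref{eq:tangent_vector_to_Omega} componentwise to the $C^1$-function $\varphi$ (more precisely, to each coordinate of $\varphi$, using \cref{remark:notation}). This yields
\begin{equation*}
  \frac{\varphi(\bp_k) - \varphi(\bp^*)}{t_k} \longrightarrow [\dot\gamma(0)](\varphi) \in \R^n,
\end{equation*}
and since $t_k \searrow 0$, the numerator tends to $0$, i.e.\ $\varphi(\bp_k) \to \varphi(\bp^*)$. Because $\varphi^{-1}$ is continuous, this forces $\bp_k \to \bp^*$ in $\MM$. (Alternatively, this fact is immediate from \cref{lemma:comparison_of_tangent_cone_on_M_with_tangent_cone_through_chart}, which identifies the images of tangential sequences under $\varphi$ with sequences generating elements of $\tangentconeRn{\varphi(U \cap \Omega)}{\varphi(\bp^*)}$.)

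Now let $U_0$ denote the neighborhood of $\bp^*$ on which $f(\bp^*) \le f(\bp)$ for every $\bp \in U_0 \cap \Omega$, which exists by local optimality. Since $\bp_k \to \bp^*$, there is an index $K$ such that $\bp_k \in U_0$ for all $k \ge K$. Combined with $\bp_k \in \Omega$, this gives $f(\bp_k) - f(\bp^*) \ge 0$ for every $k \ge K$. Dividing by the positive numbers $t_k$ and passing to the limit, which exists by \eqref{eq:tangent_vector_to_Omega}, yields
\begin{equation*}
  [\dot\gamma(0)](f) \;=\; \lim_{k\to\infty} \frac{f(\bp_k) - f(\bp^*)}{t_k} \;\ge\; 0,
\end{equation*}
which is \eqref{eq:non-negative_directional_derivatives_M}.

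There is no real obstacle here; the only subtle point is the (otherwise unstated) convergence $\bp_k \to \bp^*$, which is why I would make it explicit by testing \eqref{eq:tangent_vector_to_Omega} against a chart. The argument is the verbatim analogue of the $\R^n$ reasoning sketched in \cref{subsec:KKT_and_CQs_in_Rn}, and no Riemannian structure is needed — the $C^1$-structure on $\MM$ suffices, consistent with the paper's intrinsic viewpoint.
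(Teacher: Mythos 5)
Your proof is correct and follows essentially the same argument as the paper: take a tangential sequence $(\bp_k,t_k)$, use local optimality to get $f(\bp_k)-f(\bp^*)\ge 0$ for large $k$, divide by $t_k>0$, and pass to the limit via \eqref{eq:tangent_vector_to_Omega}. Your explicit verification that $\bp_k\to\bp^*$ (by testing \eqref{eq:tangent_vector_to_Omega} against a chart) is a detail the paper leaves implicit in the phrase \eqq{for sufficiently large $k$}, and it is a welcome clarification rather than a deviation.
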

\begin{proof}
	Suppose that $[\dot \gamma(0)] \in \tangentconeM{\Omega}{\bp^*}$ and that $(\bp_k,t_k)$ is an associated tangential sequence.
	Then we have by local optimality of $\bp^*$
	\begin{equation*}
		\begin{aligned}
			&
			0
			\le
			\frac{f(\bp_k)-f(\bp^*)}{t_k}
			&
			&
			\text{for sufficiently large $k \in \N$}
			\\
			\Rightarrow
			\quad
			&
			0
			\le
			[\dot \gamma(0)](f)
			&
			&
			\text{by \eqref{eq:tangent_vector_to_Omega}}
			.
		\end{aligned}
	\end{equation*}
	This concludes the proof.
\end{proof}

Next we introduce the concept of the linearizing cone \eqref{eq:linearizing_cone_Rn} in the tangent space, similar to \cite[Def.~4.1]{YangZhangSong2014}.
\begin{definition}[Linearizing cone]
	\label{definition:linearizing_cone_M}
	For any $\bp \in \Omega$, we define the linearizing cone to the feasible set $\Omega$ by
	\begin{equation}
		\label{eq:linearizing_cone_M}
		\begin{aligned}
			\linearizingconeM{\Omega}{\bp}
			\coloneqq
			\big\{ [\dot \gamma(0)] \in \tangentspaceM{\bp}: {}
				&
				[\dot \gamma(0)](g^i) \le 0 \quad \text{for all } i \in \AA(\bp),
				\\
				&
				[\dot \gamma(0)](h^j) = 0 \quad \text{for all } j = 1, \ldots, q
			\big\}
			.
		\end{aligned}
	\end{equation}
\end{definition}
As in \cref{subsec:KKT_and_CQs_in_Rn}, $\AA(\bp) \coloneqq \big\{ 1 \le i \le m:
g^i(\bp) = 0 \big\}$ is the index set of active inequalities at $\bp$, and
$\II(\bp) \coloneqq \{1, \ldots, m\} \setminus \AA(\bp)$ are the inactive
inequalities.
Notice that, as is customary in differential geometry, we denote the components
of the vector-valued functions $g$ and $h$ by upper indices.

\begin{remark}
	\label{remark:linearizing_cone_M}
	The linearizing cone could be defined alternatively as
	\begin{equation}
		\label{eq:comparison_of_linearizing_cone_on_M_with_linearizing_cone_through_chart}
			\bigh(){(\d \varphi)(\bp)} \, \linearizingconeM{\Omega}{\bp}
			=
			\linearizingconeRn{\varphi(U \cap \Omega)}{\varphi(\bp)},
	\end{equation}
	compare \cref{lemma:comparison_of_tangent_cone_on_M_with_tangent_cone_through_chart} for the parallel result for the tangent cone.
\end{remark}

\begin{lemma}[Relation between the cones]
	\label{lemma:relation_between_cones_M}
	For any $\bp \in \Omega$, $\linearizingconeM{\Omega}{\bp}$ is a convex cone, and $\tangentconeM{\Omega}{\bp} \subseteq \linearizingconeM{\Omega}{\bp}$ holds.
\end{lemma}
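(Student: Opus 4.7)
The plan is to address the two assertions separately, the convexity of the linearizing cone being essentially a reformulation of the definition, while the inclusion requires extracting linear information from a tangential sequence.

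For the convexity, the key observation is that for each fixed $C^1$-function $s$ defined near $\bp$, the map $\tangentspaceM{\bp} \ni [\dot \gamma(0)] \mapsto [\dot \gamma(0)](s) \in \R$ is linear, being precisely the differential $(\d s)(\bp) \in \cotangentspaceM{\bp}$ from \cref{definition:differential}. Consequently, each defining condition of $\linearizingconeM{\Omega}{\bp}$, namely $(\d g^i)(\bp) \, [\dot \gamma(0)] \le 0$ for $i \in \AA(\bp)$ and $(\d h^j)(\bp) \, [\dot \gamma(0)] = 0$ for $j = 1, \ldots, q$, cuts out a closed half-space or a hyperplane through the origin of $\tangentspaceM{\bp}$. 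The linearizing cone is the intersection of finitely many such sets and is therefore a closed convex cone (in particular convex, as claimed).

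For the inclusion, I would take an arbitrary $[\dot \gamma(0)] \in \tangentconeM{\Omega}{\bp}$ together with a tangential sequence $(\bp_k,t_k)$ witnessing it in the sense of \eqref{eq:tangent_vector_to_Omega}. For every active index $i \in \AA(\bp)$, feasibility $\bp_k \in \Omega$ gives $g^i(\bp_k) \le 0 = g^i(\bp)$, hence
\[
	[\dot \gamma(0)](g^i)
	=
	\lim_{k \to \infty} \frac{g^i(\bp_k) - g^i(\bp)}{t_k}
	\le 0.
\]
Analogously, for every $j \in \{1, \ldots, q\}$ one has $h^j(\bp_k) = 0 = h^j(\bp)$, so the corresponding limit equals zero. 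These are precisely the conditions in \eqref{eq:linearizing_cone_M}, so $[\dot \gamma(0)] \in \linearizingconeM{\Omega}{\bp}$.

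I do not expect any real obstacle; the argument mirrors the Euclidean version \cite[Lem.~12.2]{NocedalWright2006} verbatim once $\R^n$-directional derivatives are replaced by the action of tangent vectors on $C^1$-functions. An alternative one-line proof is available by pushing both cones through a chart via \cref{lemma:comparison_of_tangent_cone_on_M_with_tangent_cone_through_chart} and \cref{remark:linearizing_cone_M} and invoking the Euclidean analogue; I would nevertheless prefer the direct argument above, because it highlights the role of tangential sequences and stays consistent with the intrinsic, chart-free viewpoint emphasized throughout the paper.
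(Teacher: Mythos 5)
Your proposal is correct and takes essentially the same route as the paper: the paper's short primary argument is exactly the chart-based reduction you mention as an alternative, and its extended direct proof establishes the inclusion by the very same tangential-sequence computation you give. Your convexity argument (intersection of half-spaces and hyperplanes cut out by the linear functionals $(\d g^i)(\bp)$ and $(\d h^j)(\bp)$) is only a cosmetic rephrasing of the paper's verification via the vector-space operations $\odot$ and $\oplus_\varphi$ on curves.
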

\begin{proof}
	The result follows immediately from \eqref{eq:comparison_of_linearizing_cone_on_M_with_linearizing_cone_through_chart} and the corresponding result in $\R^n$.
	\ifthenelse{\boolean{ispreprint}}{However, we also give a direct proof here.
	To show that $\linearizingconeM{\Omega}{\bp}$ is a convex cone, let $\gamma_1$ and $\gamma_2$ be two curves about~$\bp$, generating the elements $[\dot \gamma_1(0)]$ and $[\dot \gamma_2(0)]$ in $\linearizingconeM{\Omega}{\bp}$, and let $\alpha_1, \alpha_2 \geq 0$.
	Since $\tangentspaceM{\bp}$ is a vector space under $\odot$ and $\oplus$, we have
	\begin{equation*}
		\begin{aligned}
			&
			[(\alpha_1 \odot \gamma_1) \oplus_{\varphi} (\alpha_2 \odot \gamma_2)](g^i)
			=
			\alpha_1 \, [\dot \gamma_1(0)](g^i) + \alpha_2 \, [\dot \gamma_2(0)](g^i)
			\le
			0
			&
			&
			\text{for } i \in \AA(\bp)
			,
			\\
			&
			[(\alpha_1 \odot \gamma_1) \oplus_{\varphi} (\alpha_2 \odot \gamma_2)](h^j)
			=
			\alpha_1 \, [\dot \gamma_1(0)](h^j) + \alpha_2 \, [\dot \gamma_2(0)](h^j)
			=
			0
			&
			&
			\text{for } j = 1, \ldots, q
			,
		\end{aligned}
	\end{equation*}
	hence $[(\alpha_1 \odot \gamma_1) \oplus_{\varphi} (\alpha_2 \odot \gamma_2)]$ belongs to $\linearizingconeM{\Omega}{\bp}$ as well.

	Now let $[\dot \gamma(0)] \in \tangentconeM{\Omega}{\bp}$ be associated with the tangential sequence $(\bp_k,t_k)$ to $\Omega$ at $\bp$.
	Recall that the points $\bp_k$ are feasible.
	Consequently, for $i \in \AA(\bp)$ and $k \in \N$ we have
	\begin{equation*}
		0
		\ge
		\frac{g^i(\bp_k) - g^i(\bp)}{t_k}
		\quad
		\Rightarrow
		\quad
		[\dot \gamma(0)](g^i)
		\le
		0
		.
	\end{equation*}
	Similarly, we get $[\dot \gamma(0)](h^j) = 0$ for $j = 1, \ldots, q$.
	This shows $[\dot \gamma(0)] \in \linearizingconeM{\Omega}{\bp}$.
}{}
\end{proof}

Similar to \eqref{eq:polar_cone_of_any_set_Rn}, the polar cone to a subset $B \subseteq \tangentspaceM{\bp}$ of the tangent space is defined as
\begin{equation}
	\label{eq:polar_cone_of_any_set_M}
	B^\circ
	\coloneqq
	\bigh\{\}{ (\d s)(\bp) \in \cotangentspaceM{\bp}: (\d s)(\bp) \, [\dot \gamma(0)] \le 0 \text{ for all } [\dot \gamma(0)] \in B }
	.
\end{equation}
Let us calculate a representation of $\linearizingconeM{\Omega}{\bp}^\circ$, similar to \eqref{eq:representation_polar_cone_of linearizing_cone_Rn}.

\begin{lemma}
	\label{lemma:representation_polar_cone_of linearizing_cone_M}
	For any $\bp \in \Omega$, we have
	\begin{align}
		\linearizingconeM{\Omega}{\bp}^\circ
		=
		\Big\{ (\d s)(\bp) & = \sum_{i=1}^m \mu_i \, (\d g^i)(\bp) + \sum_{j=1}^q \lambda_j \, (\d h^j)(\bp),
			\notag
			\\
		& \mu_i \ge 0 \text{ for } i \in \AA(\bp), \; \mu_i = 0 \text{ for } i \in \II(\bp), \; \lambda_j \in \R \Big\}
		\subseteq \cotangentspaceM{\bp}
		.
		\label{eq:representation_polar_cone_of linearizing_cone_M}
	\end{align}
\end{lemma}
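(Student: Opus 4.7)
The plan is to reduce the claim to the Farkas lemma stated earlier (\cref{lemma:Farkas-Lemma}), applied on the finite dimensional vector space $V = \tangentspaceM{\bp}$ with dual $V^* = \cotangentspaceM{\bp}$. A second, essentially equivalent route is to transport through a chart $(U,\varphi)$ about~$\bp$ using \eqref{eq:comparison_of_linearizing_cone_on_M_with_linearizing_cone_through_chart} and invoke the Euclidean representation \eqref{eq:representation_polar_cone_of linearizing_cone_Rn}; I would stay with the intrinsic Farkas route to remain consistent with the paper's declared preference for minimizing the explicit use of charts.

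First I would dispose of the inclusion \eqq{$\supseteq$} by a direct pairing computation. If $(\d s)(\bp)$ admits a representation of the form on the right-hand side of \eqref{eq:representation_polar_cone_of linearizing_cone_M}, then for any $[\dot \gamma(0)] \in \linearizingconeM{\Omega}{\bp}$ the value $(\d s)(\bp) \, [\dot \gamma(0)]$ is a sum of contributions $\mu_i \, [\dot \gamma(0)](g^i)$ with $\mu_i \ge 0$ and $[\dot \gamma(0)](g^i) \le 0$ for $i \in \AA(\bp)$ (the inactive indices contribute zero), plus contributions $\lambda_j \, [\dot \gamma(0)](h^j) = 0$. Hence the pairing is nonpositive and $(\d s)(\bp) \in \linearizingconeM{\Omega}{\bp}^\circ$.

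For the converse inclusion \eqq{$\subseteq$}, assemble the active inequality and equality differentials into a single linear map $A \in \LL\bigh(){\tangentspaceM{\bp}, \R^{\abs{\AA(\bp)}+2q}}$ whose components are $-(\d g^i)(\bp)$ for $i \in \AA(\bp)$ together with $(\d h^j)(\bp)$ and $-(\d h^j)(\bp)$ for $j = 1,\ldots,q$. Then membership $[\dot \gamma(0)] \in \linearizingconeM{\Omega}{\bp}$ is equivalent to $A \, [\dot \gamma(0)] \ge 0$. For $(\d s)(\bp) \in \linearizingconeM{\Omega}{\bp}^\circ$, setting $b \coloneqq -(\d s)(\bp) \in V^*$ yields $b \, [\dot \gamma(0)] \ge 0$ for every such vector, so \cref{lemma:Farkas-Lemma} produces $y \ge 0$ with $A^* y = b$. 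Splitting the components of $y$ into a block indexed by $\AA(\bp)$ and two blocks indexed by $\{1,\ldots,q\}$, the first block provides nonnegative multipliers (after sign reversal absorbing the minus in $-(\d g^i)(\bp)$) and the difference of the two equality blocks provides arbitrary real $\lambda_j$. Extending by $\mu_i \coloneqq 0$ for $i \in \II(\bp)$ gives the representation in \eqref{eq:representation_polar_cone_of linearizing_cone_M}.

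The main obstacle is purely bookkeeping: tracking signs so that the Farkas-extracted multipliers end up with the correct sign pattern (nonnegative on active inequalities, free on equalities), and ensuring the adjoint $A^*$ is interpreted as a map into $\cotangentspaceM{\bp}$ rather than into some auxiliary copy of $\R_n$ obtained via a chart. No genuinely analytic difficulty arises beyond that; once the dual pairing is fixed, the conclusion is immediate from Farkas.
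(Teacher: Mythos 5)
Your proposal is correct and essentially coincides with the paper's own direct proof: the same pairing computation handles \eqq{$\supseteq$}, and for \eqq{$\subseteq$} the paper builds the identical stacked map $A$ (rows $-(\d g^i)(\bp)$ for $i \in \AA(\bp)$ and $\pm(\d h^j)(\bp)$), applies \cref{lemma:Farkas-Lemma} with $b = -(\d s)(\bp)$, and splits and pads the resulting $y \ge 0$ exactly as you describe (the two minus signs cancel, so the active block directly yields $\mu_i \ge 0$). The paper also records an equivalent chart-based argument via \eqref{eq:comparison_of_linearizing_cone_on_M_with_linearizing_cone_through_chart} and the Euclidean representation, but your intrinsic Farkas route matches its alternative proof, so nothing further is needed.
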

\begin{proof}
	It is easy to see that for vector spaces $V$ and $W$ of the finite dimension and bijective, linear $A: V \to W$, we have $(A^{-1} K)^\circ = A^* K^\circ$ in $V^*$ for all $K \subset W$.
		Here $V^*$ and $W^*$ are the dual spaces of $V$ and $W$ and $A^*: W^* \to V^*$ is the adjoint map.
	We apply this with $K = \linearizingconeRn{\varphi(U \cap \Omega)}{\varphi(\bp)} \subset W = \R^n$, $V = \tangentspaceM{\bp}$ and $A = (\d \varphi)(\bp)$ to obtain
	\begin{equation*}
		\begin{aligned}
			\MoveEqLeft
			\linearizingconeM{\Omega}{\bp}^\circ
			\\
			& 
			=
			\Bigh(){\bigh(){(\d \varphi)(\bp)}^{-1} \linearizingconeRn{\varphi(U \cap \Omega)}{\varphi(\bp)}}^\circ
			& & \text{by \eqref{eq:comparison_of_linearizing_cone_on_M_with_linearizing_cone_through_chart}}
			\\
			&
			=
			\bigh(){(\d \varphi)(\bp)}^* \linearizingconeRn{\varphi(U \cap \Omega)}{\varphi(\bp)}^\circ
			\\
			&
			=
			\bigh(){(\d \varphi)(\bp)}^* 
			\Big\{ \sum_{i=1}^m \mu_i \, (g^i \circ \varphi^{-1})'(\varphi(\bp)) + \sum_{j=1}^q \lambda_j \, (h^j \circ \varphi^{-1})'(\varphi(\bp)), 
				\\
				& 
				\qquad \qquad 
			\mu_i \ge 0 \text{ for } i \in \AA(\varphi(\bp)), \; \mu_i = 0 \text{ for } i \in \II(\varphi(\bp)), \; \lambda_j \in \R \Big\}
			& & \text{by \eqref{eq:representation_polar_cone_of linearizing_cone_Rn}}
			\\
			&
			=
			\Big\{ \sum_{i=1}^m \mu_i \, (\d g^i)(\bp) + \sum_{j=1}^q \lambda_j \, (\d h^j)(\bp),
				\\
				& 
				\qquad \qquad 
			\mu_i \ge 0 \text{ for } i \in \AA(\bp), \; \mu_i = 0 \text{ for } i \in \II(\bp), \; \lambda_j \in \R \Big\}
			.
		\end{aligned}
	\end{equation*}
	The last equality follows from the chain rule applied to $(g^i \circ \varphi^{-1}) \circ \varphi$.
	\ifthenelse{\boolean{ispreprint}}{We also give an alternative, direct proof here using the Farkas \cref{lemma:Farkas-Lemma}.
	When $(\d s)(\bp)$ belongs to the set on the right-hand side of \cref{eq:representation_polar_cone_of linearizing_cone_M} and $[\dot \gamma(0)] \in \linearizingconeM{\Omega}{\bp}$ is arbitrary, then
	\begin{equation*}
		\begin{aligned}
			(\d s)(\bp) \, [\dot \gamma(0)]
			&
			=
			\sum_{i=1}^m \mu_i \, (\d g^i)(\bp) [\dot \gamma(0)] + \sum_{j=1}^q \lambda_j \, (\d h^j)(\bp) [\dot \gamma(0)]
			\\
			&
			=
			\sum_{i=1}^m \mu_i \, [\dot \gamma(0)](g^i) + \sum_{j=1}^q \lambda_j \, [\dot \gamma(0)](h^j)
		\end{aligned}
	\end{equation*}
	by definition of the differential; see \eqref{eq:differential_of_a_real-valued_function}.
	Utilizing the sign conditions in \eqref{eq:representation_polar_cone_of linearizing_cone_M} and the definition of $\linearizingconeM{\Omega}{\bp}$ in \eqref{eq:linearizing_cone_M} shows $(\d s)(\bp) \, [\dot \gamma(0)] \le 0$, i.e., $(\d s)(\bp) \in \linearizingconeM{\Omega}{\bp}^\circ$.

	For the converse, consider the linear map
	\begin{equation*}
		A
		\coloneqq
		\left(
			\begin{array}{l@{}l@{}l}
				- & (\d g^i)(\bp) & \big|_{i \in \AA(\bp)} \\
				- & (\d h^j)(\bp) & \big|_{j = 1, \ldots, q} \\
				& (\d h^j)(\bp) & \big|_{j = 1, \ldots, q}
			\end{array}
		\right)
	\end{equation*}
	which maps the tangent space $\tangentspaceM{\bp}$ into $\R^{r}$, where $r = \abs{\AA(\bp)} + 2 \, q$.
	By \eqref{eq:linearizing_cone_M}, $[\dot \gamma(0)] \in \linearizingconeM{\Omega}{\bp}$ holds if and only if $A \, [\dot \gamma(0)] \ge 0$.

	Now let $(\d s)(\bp) \in \linearizingconeM{\Omega}{\bp}^\circ$, i.e., $(\d s)(\bp) \, [\dot \gamma(0)] \le 0$ holds for all $[\dot \gamma(0)]$ such that $A \, [\dot \gamma(0)] \ge 0$.
	The Farkas \cref{lemma:Farkas-Lemma} (with $V = \tangentspaceM{\bp}$ and $b = -(\d s)(\bp)$) shows that $A^* y = -(\d s)(\bp)$ has a solution $y \in \R_q$, $y \ge 0$.
	Now split $y \eqqcolon (\mu_{|\AA(\bp)},\lambda^+,\lambda^-)$, set $\lambda \coloneqq \lambda^+ - \lambda^-$ and pad $\mu$ by setting $\mu_{|\II(\bp)} \coloneqq 0$.
	This shows that $(\d s)(\bp)$ indeed has the representation postulated in \eqref{eq:representation_polar_cone_of linearizing_cone_M}.
}{}
\end{proof}

We associate with \eqref{eq:nlp} the Lagrangian
\begin{equation}
	\label{eq:Lagrangian_M}
	\LL(\bp,\mu,\lambda)
	\coloneqq
	f(\bp) + \mu \, g(\bp) + \lambda \, h(\bp)
	,
\end{equation}
where $\mu \in \R_m$ and $\lambda \in \R_q$, and the KKT conditions
\begin{subequations}
	\label{eq:KKT_conditions_M}
	\begin{align}
		\label{eq:KKT_conditions_1_M}
		&
		(\d \LL)(\bp,\mu,\lambda) = (\d f)(\bp) + \mu \, (\d g)(\bp) + \lambda \, (\d h)(\bp) = 0
		,
		\\
		\label{eq:KKT_conditions_2_M}
		&
		h(\bp) = 0
		,
		\\
		\label{eq:KKT_conditions_3_M}
		&
		\mu \ge 0, \quad g(\bp) \le 0, \quad \mu \, g(\bp) = 0
		.
	\end{align}
\end{subequations}
Here we introduced for convenience of notation the differential of the vector-valued functions $g = (g^1, \ldots, g^m)\transp$
\begin{equation*}
	(\d g)(\bp)
	\coloneqq
	\begin{pmatrix}
		(\d g^1)(\bp) \\ \vdots \\ (\d g^m)(\bp)
	\end{pmatrix}
\end{equation*}
and similarly for $h$.

Just as in the case of $\MM = \R^n$, it is easy to see by \cref{lemma:representation_polar_cone_of linearizing_cone_M} that the KKT conditions \eqref{eq:KKT_conditions_M} are equivalent to
\begin{equation}
	\label{eq:negative_derivative_belongs_to_polar_of_linearizing_cone_M}
	- (\d f)(\bp) \in \linearizingconeM{\Omega}{\bp}^\circ
	.
\end{equation}
We thus obtain the analogue of \cref{theorem:first-order_necessary_optimality_conditions_Rn}:
\begin{theorem}
	\label{theorem:first-order_necessary_optimality_conditions_M}
	Suppose that $\bp^*$ is a local minimizer of \eqref{eq:nlp} and that the GCQ $\linearizingconeM{\Omega}{\bp^*}^\circ = \tangentconeM{\Omega}{\bp^*}^\circ$ holds at $\bp^*$.
	Then there exist Lagrange multipliers $\mu \in \R_m$, $\lambda \in \R_q$, such that the KKT conditions \eqref{eq:KKT_conditions_M} hold.
\end{theorem}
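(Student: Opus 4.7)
The plan is to assemble the theorem from the pieces already established in the section: the first-order necessary condition of \cref{theorem:first-order_necessary_optimality_condition_on_M}, the hypothesized GCQ, and the representation of the polar of the linearizing cone in \cref{lemma:representation_polar_cone_of linearizing_cone_M}. The logical flow is essentially the same as in the Euclidean case reviewed in \cref{subsec:KKT_and_CQs_in_Rn}, with the Bouligand tangent cone replaced by $\tangentconeM{\Omega}{\bp^*}$, the Euclidean derivative by $(\d f)(\bp^*)$, and the Euclidean polar by the polar in $\cotangentspaceM{\bp^*}$.

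First, I would apply \cref{theorem:first-order_necessary_optimality_condition_on_M} to the local minimizer $\bp^*$, yielding
\begin{equation*}
	[\dot\gamma(0)](f) \ge 0 \quad \text{for all } [\dot\gamma(0)] \in \tangentconeM{\Omega}{\bp^*}.
\end{equation*}
By \cref{definition:differential}, this is exactly $(\d f)(\bp^*)\,[\dot\gamma(0)] \ge 0$ for all $[\dot\gamma(0)]$ in the tangent cone, i.e., $-(\d f)(\bp^*) \in \tangentconeM{\Omega}{\bp^*}^\circ$ in the sense of \eqref{eq:polar_cone_of_any_set_M}.

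Next, I would invoke the assumed GCQ to replace $\tangentconeM{\Omega}{\bp^*}^\circ$ by $\linearizingconeM{\Omega}{\bp^*}^\circ$, so that $-(\d f)(\bp^*) \in \linearizingconeM{\Omega}{\bp^*}^\circ$. Then \cref{lemma:representation_polar_cone_of linearizing_cone_M} provides the explicit representation: there exist scalars $\mu_i \ge 0$ for $i \in \AA(\bp^*)$, $\mu_i = 0$ for $i \in \II(\bp^*)$, and $\lambda_j \in \R$ such that
\begin{equation*}
	-(\d f)(\bp^*) = \sum_{i=1}^m \mu_i\,(\d g^i)(\bp^*) + \sum_{j=1}^q \lambda_j\,(\d h^j)(\bp^*),
\end{equation*}
which rearranges to the stationarity condition \eqref{eq:KKT_conditions_1_M}.

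Finally, I would verify the remaining KKT conditions: \eqref{eq:KKT_conditions_2_M} and $g(\bp^*) \le 0$ in \eqref{eq:KKT_conditions_3_M} hold because $\bp^* \in \Omega$; the sign condition $\mu \ge 0$ is built into the representation above; and the complementarity $\mu\,g(\bp^*) = 0$ follows from $\mu_i = 0$ for inactive indices $i \in \II(\bp^*)$ together with $g^i(\bp^*) = 0$ for $i \in \AA(\bp^*)$. There is no real obstacle here: all the technical work has been absorbed into \cref{theorem:first-order_necessary_optimality_condition_on_M} and \cref{lemma:representation_polar_cone_of linearizing_cone_M}, and the theorem amounts to stringing these together and translating polar-cone membership into the KKT form, exactly as in the Euclidean derivation.
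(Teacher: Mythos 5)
Your proposal is correct and follows exactly the route the paper takes: the paper derives the theorem by noting (via \cref{lemma:representation_polar_cone_of linearizing_cone_M}) that the KKT conditions \eqref{eq:KKT_conditions_M} are equivalent to $-(\d f)(\bp^*) \in \linearizingconeM{\Omega}{\bp^*}^\circ$, and then combining \cref{theorem:first-order_necessary_optimality_condition_on_M} (which gives $-(\d f)(\bp^*) \in \tangentconeM{\Omega}{\bp^*}^\circ$) with the GCQ, precisely as you do. Your explicit verification of feasibility, sign, and complementarity is a fine spelling-out of what the paper leaves implicit in that equivalence.
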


\subsection{Constraint Qualifications for Optimization Problems on Smooth Manifolds}
\label{subsec:CQs_on_M}

In this section we introduce the constraint qualifications (CQ) of linear independence (LICQ), Mangasarian--Fromovitz (MFCQ), Abadie (ACQ) and Guignard (GCQ) and show that the chain of implications
\begin{equation}
	\label{eq:chain_of_implications}
	\text{LICQ} \; \Rightarrow \; \text{MFCQ} \; \Rightarrow \; \text{ACQ} \; \Rightarrow \; \text{GCQ}
\end{equation}
continues to hold in the smooth manifold setting. 
Except for LICQ, which has been used in \cite{YangZhangSong2014}, this is the first time these conditions are being formulated and utilized on smooth manifolds.

\begin{definition}[Constraint qualifications]
	\label{definition:CQs}
	Suppose that $\bp \in \Omega$ holds.
	We define the following constraint qualifications at $\bp$.
	\begin{enumerate}[label=$(\alph*)$,leftmargin=*,itemsep=\baselineskip]
		\item
			\label{item:LICQ}
			The LICQ holds at $\bp$ if $\{ (\d h^j)(\bp) \}_{j=1}^q \cup \{ (\d g^i)(\bp)\}_{i \in \AA(\bp)}$ is a linearly independent set in the cotangent space $\cotangentspaceM{\bp}$.

		\item
			\label{item:MFCQ}
			The MFCQ holds at $\bp$ if $\{ (\d h^j)(\bp) \}_{j=1}^q$ is a linearly independent set and if there exists a tangent vector $[\dot \gamma(0)]$ (termed an MFCQ vector) such that
			\begin{equation}
				\label{eq:MFCQ_conditions_M}
				\begin{aligned}
					(\d g^i)(\bp) [\dot \gamma(0)]
					&
					<
					0
					&
					&
					\text{for all } i \in \AA(\bp)
					,
					\\
					(\d h^j)(\bp) [\dot \gamma(0)]
					&
					=
					0
					&
					&
					\text{for all } j = 1, \ldots, q
					.
				\end{aligned}
			\end{equation}

		\item
			\label{item:ACQ}
			The ACQ holds at $\bp$ if $\linearizingconeM{\Omega}{\bp} = \tangentconeM{\Omega}{\bp}$.

		\item
			\label{item:GCQ}
			The GCQ holds at $\bp$ if $\linearizingconeM{\Omega}{\bp}^\circ = \tangentconeM{\Omega}{\bp}^\circ$.
	\end{enumerate}
\end{definition}

\begin{remark}
	\label{remark:CQs}
	The constraint qualifications in \cref{definition:CQs} are equivalent to their respective counterparts for the local transcription of \eqref{eq:nlp} into an optimization problem in Euclidean space, see \eqref{eq:nlp_in_charts}.
	For instance, when $\varphi$ is a chart about $\bp \in \Omega$, then the LICQ is equivalent to the linear independence of the derivatives $\{ (h^j \circ \varphi^{-1})'(\varphi(\bp)) \}_{j=1}^q \cup \{ (g^i \circ \varphi^{-1})'(\varphi(\bp))\}_{i \in \AA(\varphi(\bp))}$.
	A similar statement holds for the MFCQ, ACQ, and GCQ.
	The result \eqref{eq:chain_of_implications} can therefore be shown by invoking the corresponding statement for \eqref{eq:nlp_in_charts}.
	However, we provide also direct proofs in \cref{proposition:LICQ_implies_MFCQ_M,proposition:MFCQ_implies_ACQ_M}.
\end{remark}

\begin{proposition}
	\label{proposition:LICQ_implies_MFCQ_M}
	LICQ implies MFCQ.
\end{proposition}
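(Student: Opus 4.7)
The plan is to reduce the statement to a finite-dimensional linear-algebra observation and to exploit the duality between the tangent space $\tangentspaceM{\bp}$ and its dual $\cotangentspaceM{\bp}$. First I would note that LICQ already contains the linear independence of $\{ (\d h^j)(\bp) \}_{j=1}^q$, so the first requirement of MFCQ is immediate, and only the existence of a suitable MFCQ vector remains to be produced.

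To construct one, I consider the linear evaluation map
\begin{equation*}
    T \colon \tangentspaceM{\bp} \to \R^{\abs{\AA(\bp)}+q},
    \quad
    v \mapsto \bigh(){(\d g^i)(\bp)\,v}_{i \in \AA(\bp)} \times \bigh(){(\d h^j)(\bp)\,v}_{j=1}^q .
\end{equation*}
The adjoint $T^*$ maps the standard basis of the codomain precisely to the collection of cotangent vectors that LICQ declares to be linearly independent in $\cotangentspaceM{\bp}$; hence $T^*$ is injective. Since $\tangentspaceM{\bp}$ is finite dimensional, injectivity of $T^*$ is equivalent to surjectivity of $T$.

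By surjectivity of $T$, I can pick $v \in \tangentspaceM{\bp}$ with $T(v)$ equal to the vector whose entries indexed by $i \in \AA(\bp)$ are $-1$ and whose entries indexed by $j = 1, \ldots, q$ are $0$. By \eqref{eq:tangent_space_on_M}, this $v$ is realized as $[\dot \gamma(0)]$ for some $C^1$-curve $\gamma$ about $\bp$, and by construction it satisfies the strict inequalities and equalities in \eqref{eq:MFCQ_conditions_M}. Combined with the linear independence of $\{ (\d h^j)(\bp) \}_{j=1}^q$ noted above, this establishes MFCQ.

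There is no genuine obstacle in the argument; the only point requiring care is the finite-dimensional duality statement \emph{injectivity of $T^*$ is equivalent to surjectivity of $T$}, applied between the abstract vector spaces $\tangentspaceM{\bp}$ and $\R^{\abs{\AA(\bp)}+q}$. As noted in \cref{remark:CQs}, one could alternatively pull back via a chart and invoke the classical Euclidean implication, but the direct argument above is shorter and stays entirely within the intrinsic language of differential geometry preferred throughout this paper.
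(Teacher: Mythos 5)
Your proof is correct and follows essentially the same route as the paper: both form the linear map sending $v \in \tangentspaceM{\bp}$ to the stacked values $(\d g^i)(\bp)\,v$, $i \in \AA(\bp)$, and $(\d h^j)(\bp)\,v$, $j=1,\ldots,q$, and solve for the right-hand side $(-1,\ldots,-1,0,\ldots,0)\transp$ using surjectivity, which follows from LICQ. The only difference is that you spell out the surjectivity via injectivity of the adjoint (the paper simply asserts it \eqq{by assumption}), which is a harmless and correct elaboration.
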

\begin{proof}
	Consider the linear system
	\begin{equation*}
		A \, [\dot \gamma(0)]
		\coloneqq
		\left(
			\begin{array}{l@{}l@{}l}
				& (\d g^i)(\bp) & \big|_{i \in \AA(\bp)} \\
				& (\d h^j)(\bp) & \big|_{j = 1, \ldots, q}
			\end{array}
		\right)
		[\dot \gamma(0)]
		=
		(-1, \ldots, -1, 0, \ldots, 0)\transp
		.
	\end{equation*}
	Since the linear map $A$ is surjective by assumption, this system is solvable, and $[\dot \gamma(0)]$ satisfies the MFCQ conditions.
\end{proof}

In order to show that MFCQ implies ACQ, we first prove the following result; compare \cite[Lem.~2.37]{GeigerKanzow2002}.
\begin{proposition}
	\label{proposition:Ljusternik}
	Suppose that $\bp \in \Omega$ and that the MFCQ holds at $\bp$ with the MFCQ
	vector $[\dot \gamma(0)]$. Then the curve $\gamma$ about~$\bp$ which
	generates $[\dot \gamma(0)]$ can be chosen to satisfy the following:
	\begin{enumerate}[label=$(\alph*)$,leftmargin=*]
		\item
			$h^j(\gamma(t)) = 0$ for all $t \in (-\varepsilon,\varepsilon)$ and all $j
			= 1, \ldots, q$.

		\item
			$\gamma(t) \in \Omega$ for all $t \in [0,\varepsilon)$ and even
			$g^i(\gamma(t)) < 0$ for all $t \in (0,\varepsilon)$ and all $i = 1, \ldots,
			m$.
	\end{enumerate}
\end{proposition}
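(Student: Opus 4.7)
The plan is to reduce to a chart and apply a classical Lyusternik-type construction based on the implicit function theorem, as is customary in the Euclidean proof of this result. Let $(U,\varphi)$ be a chart about $\bp$, put $x^* \coloneqq \varphi(\bp)$, and set $\tilde f \coloneqq f \circ \varphi^{-1}$, $\tilde g \coloneqq g \circ \varphi^{-1}$, $\tilde h \coloneqq h \circ \varphi^{-1}$. By \cref{remark:CQs} the MFCQ transfers to $x^*$, producing a vector $d \in \R^n$ (namely $d = (\d\varphi)(\bp)\,[\dot\gamma(0)]$) such that $(\tilde h^j)'(x^*)\, d = 0$ for all $j$ and $(\tilde g^i)'(x^*)\, d < 0$ for all $i \in \AA(\bp)$, while the differentials $\{(\tilde h^j)'(x^*)\}_{j=1}^q$ are linearly independent.

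Next I would construct a curve $\alpha\colon(-\varepsilon,\varepsilon) \to \R^n$ with $\alpha(0) = x^*$, $\dot\alpha(0) = d$, and $\tilde h(\alpha(t)) \equiv 0$. Since $(\tilde h)'(x^*)\colon \R^n \to \R^q$ is surjective, choose a complementary subspace $W \subseteq \R^n$ of dimension $q$ on which $(\tilde h)'(x^*)$ is a linear isomorphism onto $\R^q$. Apply the implicit function theorem to
\begin{equation*}
	F\colon (-\delta,\delta) \times W \to \R^q,
	\qquad
	F(t,w) \coloneqq \tilde h(x^* + t\, d + w),
\end{equation*}
noting $F(0,0) = 0$ and $\partial_w F(0,0) = (\tilde h)'(x^*)|_W$ is an isomorphism. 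This yields a $C^1$-map $t \mapsto w(t)$ with $w(0) = 0$ and $F(t,w(t)) = 0$. Differentiating $F(t,w(t)) = 0$ at $t = 0$ and using $(\tilde h)'(x^*)\, d = 0$ gives $\dot w(0) = 0$. Setting $\alpha(t) \coloneqq x^* + t\, d + w(t)$, we obtain $\dot\alpha(0) = d$ and $\tilde h(\alpha(t)) = 0$ for all $t \in (-\varepsilon,\varepsilon)$.

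It remains to verify the inequality part. For $i \in \AA(\bp)$, the function $t \mapsto \tilde g^i(\alpha(t))$ vanishes at $t=0$ with derivative $(\tilde g^i)'(x^*)\,\dot\alpha(0) = (\tilde g^i)'(x^*)\,d < 0$, hence is strictly negative on some $(0,\varepsilon)$. For $i \in \II(\bp)$ we have $\tilde g^i(x^*) < 0$, so by continuity $\tilde g^i(\alpha(t)) < 0$ on some interval about $0$; shrinking $\varepsilon$ gives strict negativity of all $\tilde g^i$ on $(0,\varepsilon)$. Finally, define the curve on the manifold by $\gamma(t) \coloneqq \varphi^{-1}(\alpha(t))$. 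Then $\gamma(0) = \bp$, $\varphi \circ \gamma = \alpha$ is $C^1$, and $\frac{\d}{\dt}(\varphi \circ \gamma)(0) = d$, so $\gamma$ lies in the prescribed equivalence class and generates the original MFCQ vector; both feasibility claims follow from the corresponding statements for $\alpha$ in the chart.

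The main obstacle is the implicit-function step, where one must build the correction $w(t)$ so that the constraint $h$ is satisfied identically along the curve, not merely to first order, while preserving the prescribed velocity $d$; the linear independence half of MFCQ is precisely what enables the IFT to apply. The inequality part is then purely a first-order/continuity argument, and transferring from the chart to $\MM$ is cosmetic thanks to the smoothness of $\varphi^{-1}$.
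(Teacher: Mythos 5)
Your proposal is correct and follows essentially the same route as the paper's proof: pass to a chart, use the implicit function theorem (enabled by the linear independence of the equality-constraint differentials) to add a correction term that makes $h$ vanish identically along the curve without changing the velocity, and then handle the active inequalities by a first-order Taylor argument and the inactive ones by continuity. The only cosmetic differences are that the paper parametrizes the correction via $y \mapsto (h \circ \varphi^{-1})'(x_0)\transp y$ (making the relevant Jacobian symmetric positive definite) and starts from an arbitrary curve generating the MFCQ vector rather than the straight line in the chart, which does not change the substance of the argument.
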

\begin{proof}
	Choose a chart $\varphi$ about~$\bp$ and set $x_0 \coloneqq \varphi(\bp)$.
	We start with an arbitrary $C^1$-curve $\zeta$ about~$\bp$ which generates the MFCQ vector $[\dot \gamma(0)]$.
	We are going to define, in the course of the proof, an alternative $C^1$-curve $\gamma$ about~$\bp$ which generates the same tangent vector and which satisfies the conditions stipulated.

	In the absence of equality constraints ($q = 0$), we can simply take $\gamma = \zeta$.
	Suppose now that $q \ge 1$ holds.
	For some $\varepsilon > 0$, $\zeta(t)$ belongs to the domain of $\varphi$ whenever $t \in (-\varepsilon,\varepsilon)$.
	Define
	\begin{equation*}
		H(y,t)
		\coloneqq
		(h \circ \varphi^{-1}) \varh(){ (\varphi \circ \zeta)(t) + (h \circ \varphi^{-1})'(x_0)\transp y }
		,
		\quad
		(y,t) \in \R^q \times (-\varepsilon,\varepsilon)
		.
	\end{equation*}
	Then $H(0,0) = (h \circ \varphi^{-1})(x_0 + 0) = h(\bp) = 0$ holds.
	Moreover, by the chain rule, the Jacobian of $H$ w.r.t.\ $y$ is
	\begin{equation*}
		H_y(y,t)
		=
		(h \circ \varphi^{-1})' \varh(){ (\varphi \circ \zeta)(t) + (h \circ \varphi^{-1})'(x_0)\transp y } \, (h \circ \varphi^{-1})'(x_0)\transp,
	\end{equation*}
	and in particular, $H_y(0,0) = (h \circ \varphi^{-1})'(x_0) \, (h \circ \varphi^{-1})'(x_0)\transp$. Since $\{ (\d h^j)(\bp) \}_{j=1}^q$ is a linearly independent set of cotangent vectors, the $q \times n$-matrix $(h \circ \varphi^{-1})'(x_0)$ has rank $q$.
	To see this, consider the tangent vectors along the curves $t \mapsto \gamma_k(t) \coloneqq \varphi^{-1}(\varphi(\bp) + t \, e_k)$ for $k = 1, \ldots, n$.
	The entry $(j,k)$ of $(h \circ \varphi^{-1})'(x_0)$ equals $(\d h^j)(\bp) \, [\dot \gamma_k(0)] = \restr{\frac{\d}{\dt} (h^j \circ \gamma_k)(t)}{t=0}$.
	Since the tangent vectors $\{[\dot \gamma_k(0)]\}_{k=1}^n$ are linearly independent and the cotangent vectors $\{ (\d h^j)(\bp) \}_{j=1}^q$ as well, the matrix $(h \circ \varphi^{-1})'(x_0)$ has full rank as claimed.
	This shows that $H_y(0,0)$ is symmetric positive definite.
	Moreover,
	\begin{equation*}
		H_t(y,t)
		=
		(h \circ \varphi^{-1})' \varh(){ (\varphi \circ \zeta)(t) + (h \circ \varphi^{-1})'(x_0)\transp y } \, (\varphi \circ \zeta)'(t)
		,
	\end{equation*}
	whence $H_t(0,0) = (h \circ \varphi^{-1})'(x_0) \, (\varphi \circ \zeta)'(0) = (h \circ \zeta)'(0)$.
	Notice that the $j$-th coordinate of $H_t(0,0)$ is equal to $[\dot \zeta(0)](h^j) = (\d h^j)(\bp) \, [\dot \zeta(0)] = 0$ by the properties of the MFCQ vector $[\dot \zeta(0)]$, for any $j = 1, \ldots, q$. 
	Thus we conclude $H_t(0,0) = 0$.

	The implicit function theorem ensures that there exists a function
	$y\colon(-\varepsilon_0,\varepsilon_0) \to \R^q$ of class $C^1$ such that
	$H(y(t),t) = 0$ and $y(0) = 0$ holds, and moreover, $\dot y(0) = H_y(0,0)^{-1}
	H_t(0,0) = 0$.

	Using $y(\cdot)$, we define, on a suitable open interval containing~$0$, the curve
	\begin{equation*}
		\gamma(t)
		\coloneqq
		\varphi^{-1} \varh(){ (\varphi \circ \zeta)(t) + (h \circ \varphi^{-1})'(x_0)\transp y(t) } \in \MM
		.
	\end{equation*}
	This curve is of class $C^1$ by construction, it satisfies $\gamma(0) = \varphi^{-1} (x_0 + 0) = \bp$ and generates the same tangent vector as the original curve $\zeta$.
	To see the latter, we consider an arbitrary $C^1$-function $f$ defined near~$\bp$ and calculate
	\begin{equation*}
		\begin{aligned}
			(f \circ \gamma)'(t)
			&
			=
			(f \circ \varphi^{-1})' \varh(){ (\varphi \circ \zeta)(t) + (h \circ \varphi^{-1})'(x_0)\transp y(t) }
			\\
			&
			\quad
			{} \cdot \bigh[]{ (\varphi \circ \zeta)'(t) + (h \circ \varphi^{-1})'(x_0)\transp \dot y(t) }
			.
		\end{aligned}
	\end{equation*}
	This implies
	\begin{equation*}
		[\dot \gamma(0)](f)
		=
		(f \circ \gamma)'(0)
		=
		(f \circ \varphi^{-1})'(x_0) \, (\varphi \circ \zeta)'(0) = (f \circ \zeta)'(0)
		=
		[\dot \zeta(0)](f)
		.
	\end{equation*}

	By construction, we have
	\begin{equation*}
		h(\gamma(t))
		=
		(h \circ \varphi^{-1}) \varh(){ (\varphi \circ \zeta)(t) + (h \circ \varphi^{-1})'(x_0)\transp y(t) }
		=
		H(y(t),t)
		=
		0
	\end{equation*}
	on a suitable interval $(-\varepsilon,\varepsilon)$.
	It remains to verify the conditions pertaining to the inequality constraints.
	When $i \in \II(\bp)$, then by continuity, $g^i(\gamma(t)) < 0$ for all $t \in (-\varepsilon_i, \varepsilon_i)$.
	When $i \in \AA(\bp)$, consider the auxiliary function $\phi(t) \coloneqq g^i(\gamma(t))$, which satisfies $\phi(0) = g^i(\gamma(0)) = 0$ and $\dot \phi(0) = (\d g^i)(\bp) [\dot \gamma(0)] = (\d g^i)(\bp) [\dot \zeta(0)] < 0$.
	An applications of Taylor's theorem now implies that there exists $\varepsilon_i > 0$ such that $\phi(t) < 0$ holds for $t \in (0,\varepsilon_i)$.
	Taking $\varepsilon = \min \{ \varepsilon_i: i = 1, \ldots, m\}$ finishes the proof.
\end{proof}

\begin{proposition}
	\label{proposition:MFCQ_implies_ACQ_M}
	MFCQ implies ACQ.
\end{proposition}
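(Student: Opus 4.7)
The plan is to establish the only nontrivial inclusion $\linearizingconeM{\Omega}{\bp} \subseteq \tangentconeM{\Omega}{\bp}$, since the reverse inclusion is already provided by \cref{lemma:relation_between_cones_M}. The idea is the standard one from the Euclidean case: perturb an arbitrary linearizing direction by a small multiple of the MFCQ vector to obtain a new direction that is itself an MFCQ vector (strict in the active inequalities, zero in the equalities), feed it into \cref{proposition:Ljusternik} to obtain a feasible curve realizing it as a tangent vector to $\Omega$, and then pass to the limit using the closedness of the tangent cone from \cref{lemma:properties_of_the_tangent_cone_on_M}.

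Concretely, let $[\dot d(0)] \in \linearizingconeM{\Omega}{\bp}$ be arbitrary and let $[\dot \zeta(0)]$ be an MFCQ vector as in \cref{definition:CQs}\ref{item:MFCQ}. For $\varepsilon > 0$ I would form, using the vector space operations on $\tangentspaceM{\bp}$ from \cref{remark:tangent_space}, the perturbed tangent vector $[\dot d_\varepsilon(0)] \coloneqq [\dot d(0)] \oplus (\varepsilon \odot [\dot \zeta(0)])$. A short computation with the differential shows
\begin{equation*}
	(\d g^i)(\bp) [\dot d_\varepsilon(0)]
	=
	(\d g^i)(\bp)[\dot d(0)] + \varepsilon \, (\d g^i)(\bp)[\dot \zeta(0)]
	< 0
	\quad \text{for all } i \in \AA(\bp),
\end{equation*}
since the first summand is $\le 0$ (as $[\dot d(0)] \in \linearizingconeM{\Omega}{\bp}$) and the second is strictly negative by \eqref{eq:MFCQ_conditions_M}. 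Likewise $(\d h^j)(\bp)[\dot d_\varepsilon(0)] = 0$ for all $j$. Hence $[\dot d_\varepsilon(0)]$ itself qualifies as an MFCQ vector.

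Applying \cref{proposition:Ljusternik} to this MFCQ vector yields a $C^1$-curve $\gamma_\varepsilon$ about $\bp$ generating $[\dot d_\varepsilon(0)]$ with $\gamma_\varepsilon(t) \in \Omega$ for all $t \in [0,\delta_\varepsilon)$. Choosing the tangential sequence $(\bp_k, t_k) = (\gamma_\varepsilon(1/k), 1/k)$ and invoking the chain rule exactly as in the proof of \cref{lemma:comparison_of_tangent_cone_on_M_with_tangent_cone_through_chart} shows $[\dot d_\varepsilon(0)] \in \tangentconeM{\Omega}{\bp}$. Finally, $[\dot d_\varepsilon(0)] \to [\dot d(0)]$ in $\tangentspaceM{\bp}$ as $\varepsilon \searrow 0$, and since $\tangentconeM{\Omega}{\bp}$ is closed by \cref{lemma:properties_of_the_tangent_cone_on_M}, we conclude $[\dot d(0)] \in \tangentconeM{\Omega}{\bp}$.

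The main obstacle is conceptual rather than technical: one must recognize that \cref{proposition:Ljusternik} is being used with the \emph{perturbed} vector $[\dot d_\varepsilon(0)]$ serving as the MFCQ vector (which is legitimate because it satisfies the MFCQ inequalities/equalities, even though the MFCQ hypothesis originally came with $[\dot\zeta(0)]$). Everything else — the use of $\oplus$ and $\odot$, convergence in the finite-dimensional tangent space, and the extraction of a tangential sequence from a feasible curve — is routine once this perturbation trick is in place.
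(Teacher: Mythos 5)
Your proposal is correct and follows essentially the same route as the paper: the paper likewise perturbs the given linearizing direction by a small multiple of the MFCQ vector (realized as the curve $\gamma_0 \oplus_{\varphi} (\tau \odot \gamma)$, which is exactly your $\oplus$/$\odot$ construction at the level of representatives), verifies that the perturbed vector satisfies the MFCQ conditions, invokes \cref{proposition:Ljusternik} to place it in $\tangentconeM{\Omega}{\bp}$, and concludes by the closedness of the tangent cone from \cref{lemma:properties_of_the_tangent_cone_on_M}. No gaps to report.
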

\begin{proof}
	In view of \cref{lemma:relation_between_cones_M}, we only need to show $\tangentconeM{\Omega}{\bp} \supset \linearizingconeM{\Omega}{\bp}$.
	To this end, suppose that $[\dot \gamma_0(0)]$ is an element of $\linearizingconeM{\Omega}{\bp}$ defined in \eqref{eq:linearizing_cone_M}, generated by some $C^1$-curve about~$\bp = \gamma_0(0)$.
	Moreover, let $\gamma$ be another $C^1$-curve about~$\bp$ such that $[\dot \gamma(0)]$ is an MFCQ vector, see \eqref{eq:MFCQ_conditions_M}.
	Finally, choose an arbitrary chart $\varphi$ about~$\bp$.

	For any $\tau \in (0,1]$, consider the curve
	\begin{equation*}
		\gamma_0 \oplus_{\varphi} (\tau \odot \gamma): t \mapsto \varphi^{-1} \bigh(){ (\varphi \circ \gamma_0)(t) + (\varphi \circ \gamma)(\tau \, t) - \varphi(\bp) } \in \MM,
	\end{equation*}
	which is defined on an interval $(-\varepsilon,\varepsilon)$ where both $\gamma$ and $\gamma_0$ are defined.
	Moreover by reducing $\varepsilon$ if necessary we achieve that $\gamma(t)$ and $\gamma(\tau \, t)$ belong to the domain of the chosen chart $\varphi$ and that $(\varphi \circ \gamma_0)(t) + (\varphi \circ \gamma)(\tau \, t) - \varphi(\bp)$ belongs to the image of $\varphi$ so that $\gamma_0 \oplus_\varphi (\tau \odot \gamma)$ is well-defined for $t \in (-\varepsilon,\varepsilon)$.

	We first show that $[\textstyle \frac{\d}{\dt} (\gamma_0 \oplus_{\varphi} (\tau \odot \gamma))(0)] \to [\dot \gamma_0(0)]$ as $\tau \searrow 0$.
	Indeed, for any $C^1$-function $f$ defined near~$\bp$, we have
	\begingroup
	\allowdisplaybreaks[4]
	\begin{align*}
		\MoveEqLeft
		(\d f)(\bp) [\textstyle \frac{\d}{\dt} (\gamma_0 \oplus_{\varphi} (\tau \odot \gamma))(0)]
		\\
		&
		=
		[\textstyle \frac{\d}{\dt} (\gamma_0 \oplus_{\varphi} (\tau \odot \gamma))(0)](f)
		&
		&
		\text{by definition of $(\d f)(\bp)$, see \eqref{eq:differential_of_a_real-valued_function}}
		\\
		&
		=
		\restr{\frac{\d}{\dt} \bigh[]{ f \circ (\gamma_0 \oplus_{\varphi} (\tau \odot \gamma)) }}{t=0}
		&
		&
		\text{by def.\ of tangent vectors, see \eqref{eq:tangent_vector_generated_by_a_curve}}
		\\
		&
		=
		\mrep{(f \circ \varphi^{-1})'(\varphi(\bp)) \varh[]{ \restr{\frac{\d}{\dt} \bigh(){ (\varphi \circ \gamma_0) + \tau \, (\varphi \circ \gamma) }}{t=0} } \quad \text{by the chain rule}}{}
		\\
		&
		=
		\restr{\frac{\d}{\dt} (f \circ \gamma_0)}{t=0}
		+
		\tau \, \restr{\frac{\d}{\dt} (f \circ \gamma)}{t=0}
		&
		&
		\text{by the chain rule}
		\\
		&
		=
		(\d f)(\bp) [\dot \gamma_0(0)]
		+
		\tau \, (\d f)(\bp) [\dot \gamma(0)]
		,
	\end{align*}
	\endgroup
	and the right-hand side converges to $[\dot \gamma_0(0)](f)$ as $\tau \searrow 0$.

	Next we show that the tangent vector along $\gamma_0 \oplus_{\varphi} (\tau \odot \gamma)$ is an MFCQ vector for any $\tau \in (0,1]$.
	Similarly as above, we have
	\begin{equation*}
		(\d g^i)(\bp) [\textstyle \frac{\d}{\dt} (\gamma_0 \oplus_{\varphi} (\tau \odot \gamma))(0)]
		=
		(\d g^i)(\bp) [\dot \gamma_0(0)]
		+
		\tau \, (\d g^i)(\bp) [\dot \gamma(0)]
	\end{equation*}
	which is negative for any $i \in \AA(\bp)$ since $\tau > 0$.
	Analogously, $(\d h^j)(\bp) [\textstyle \frac{\d}{\dt} (\gamma_0 \oplus_{\varphi} (\tau \odot \gamma))(0)] = 0$ follows for all $j = 1, \ldots, q$.
	This confirms that $\gamma_0 \oplus_{\varphi} (\tau \odot \gamma)$ is indeed an MFCQ vector.

	Fix $\tau \in (0,1]$.
	While $\gamma_0 \oplus_{\varphi} (\tau \odot \gamma)$ itself may not be feasible near $t = 0$, \cref{proposition:Ljusternik} shows that we can replace it by an equivalent $C^1$-curve which is feasible for $t \in [0,\varepsilon_\tau)$.
	In other words, the equivalence class $[\textstyle \frac{\d}{\dt} (\gamma_0 \oplus_{\varphi} (\tau \odot \gamma))(0)]$ belongs to the tangent cone $\tangentconeM{\Omega}{\bp}$.
	We showed above that $[\textstyle \frac{\d}{\dt} (\gamma_0 \oplus_{\varphi} (\tau \odot \gamma))(0)] \to [\dot \gamma_0(0)]$ holds as $\tau \searrow 0$.
	Since $\tangentconeM{\Omega}{\bp}$ is closed, we conclude that $[\dot \gamma_0(0)] \in \tangentconeM{\Omega}{\bp}$ holds. 
\end{proof}

Finally, the fact that ACQ implies GCQ is trivial, so \eqref{eq:chain_of_implications} is proved.

\section{Constraint Qualifications and the Polyhedron of Lagrange Multipliers}
\label{sec:CQs_and_Lagrange_multipliers}

In this section we consider a number of results relating various constraint
qualifications to the set of KKT multipliers at a local minimizer of
\eqref{eq:nlp}. To this end, we fix an arbitrary feasible point $\bp \in \Omega$
and consider the cone
\begin{equation}
	\label{eq:cone_of_}
	\FF(\bp)
	\coloneqq
	\{ f \in C^1(\MM,\R): \bp \text{ is a local minimizer for \eqref{eq:nlp}} \}
\end{equation}
of objective functions of class $C^1$ attaining a local minimum at $\bp$.
For $f \in \FF(\bp)$, we denote by
\begin{equation}
	\label{eq:polyhedron_of_Lagrange_multipliers}
	\Lambda(f;\bp)
	\coloneqq
	\{ (\mu,\lambda) \in \R_m \times \R_p: \eqref{eq:KKT_conditions_M} \text{ holds} \}
\end{equation}
the corresponding set of Lagrange multipliers.
It is easy to see that $\Lambda(f;\bp)$ is a closed, convex (potentially empty) polyhedron.

The following theorem is known in the case $\MM = \R^n$; see \cite{Gauvin1977,GouldTolle1971}
and \cite[Thms.~1~and~2]{Wachsmuth2012:1}.
It continues to hold verbatim for \eqref{eq:nlp}.
\begin{theorem}[Connections between CQs and Lagrange Multipliers]
	\label{theorem:Lagrange_multipliers_and_CQs}
	Suppose that $\bp \in \Omega$.
	\begin{enumerate}[label=$(\alph*)$,leftmargin=*]
		\item
			\label{item:GCQ_is_equivalent_to_nonempty_set_of_multipliers}
			The set $\Lambda(f;\bp)$ is non-empty for all $f \in \FF(\bp)$ if and only if \hyperref[item:GCQ]{(GCQ)} holds at $\bp$.
		\item
			\label{item:MFCQ_implies_compactness_of_set_of_multipliers}
			Suppose \hyperref[item:MFCQ]{(MFCQ)} holds at $\bp$.
			Then the set $\Lambda(f;\bp)$ is compact for all $f \in \FF(\bp)$.
		\item
			\label{item:compactness_of_set_of_multipliers_implies_MFCQ}
			If $\Lambda(f;\bp)$ is non-empty, compact for some $f \in \FF(\bp)$, then \hyperref[item:MFCQ]{(MFCQ)} holds at $\bp$.
		\item
			\label{item:LICQ_is_equivalent_to_singleton_set_of_multipliers}
			The set $\Lambda(f;\bp)$ is a singleton for all $f \in \FF(\bp)$ if and only if \hyperref[item:LICQ]{(LICQ)} holds at $\bp$.
	\end{enumerate}
\end{theorem}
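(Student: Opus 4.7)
The plan is to reduce each of the four statements to its classical counterpart in Euclidean space. Fix an arbitrary chart $(U,\varphi)$ about~$\bp$, and consider alongside \eqref{eq:nlp} its local transcription \eqref{eq:nlp_in_charts}. By \cref{lemma:local_optimality_in_charts}, local optimality at $\bp$ is equivalent to local optimality at $\varphi(\bp)$; by \cref{remark:CQs}, each of LICQ, MFCQ, ACQ, GCQ at $\bp$ is equivalent to the respective CQ at $\varphi(\bp)$ for \eqref{eq:nlp_in_charts}; and a chain-rule computation shows that the KKT conditions \eqref{eq:KKT_conditions_M} for objective $f$ with multipliers $(\mu,\lambda)$ at $\bp$ are equivalent to the classical KKT conditions for $f \circ \varphi^{-1}$ with the same $(\mu,\lambda)$ at $\varphi(\bp)$. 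Consequently $\Lambda(f;\bp)$ coincides, as a subset of $\R_m \times \R_q$, with the Euclidean multiplier set for \eqref{eq:nlp_in_charts}.

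With these equivalences in place, the forward directions are immediate. GCQ yields $\Lambda(f;\bp) \neq \emptyset$ for every $f \in \FF(\bp)$ by \cref{theorem:first-order_necessary_optimality_conditions_M}, proving the ``$\Leftarrow$'' direction of (a). MFCQ gives compactness of $\Lambda(f;\bp)$ via the classical result of \cite{Gauvin1977}, proving (b); the same result applied to the fixed $f$ in (c) forces MFCQ at $\varphi(\bp)$ and hence at $\bp$. For the ``$\Leftarrow$'' direction of (d), LICQ implies MFCQ by \cref{proposition:LICQ_implies_MFCQ_M} and hence GCQ, so $\Lambda(f;\bp)$ is non-empty; linear independence of $\{(\d g^i)(\bp)\}_{i \in \AA(\bp)} \cup \{(\d h^j)(\bp)\}_{j=1}^q$ in turn forces unique solvability of \eqref{eq:KKT_conditions_1_M} in $(\mu,\lambda)$.

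The remaining converse directions of (a) and (d) require, given failure of GCQ respectively LICQ at $\bp$, the construction of an $f \in \FF(\bp)$ for which $\Lambda(f;\bp)$ is empty respectively not a singleton. The classical results of \cite{GouldTolle1971,Wachsmuth2012:1} supply a $C^1$-function $F$ defined on a neighborhood of $\varphi(\bp)$ in $\R^n$ with the required behavior. The step needing care is promoting $F$ to a globally defined $C^1$-function on $\MM$. Since $\MM$ is Hausdorff and second-countable it admits smooth partitions of unity, so we may choose a bump $\chi \colon \MM \to [0,1]$ equal to $1$ on a neighborhood $V$ of $\bp$ with $\overline{V} \subset U$ and $\operatorname{supp} \chi$ compactly contained in $U$. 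Then $f \coloneqq \chi \cdot (F \circ \varphi)$, extended by $0$ outside $U$, is of class $C^1$ on $\MM$, agrees with $F \circ \varphi$ on $V$, and therefore inherits both the local minimum at $\bp$ and the Lagrange multiplier set of $F$ at $\varphi(\bp)$. This cut-off construction is the one non-routine step; the rest is bookkeeping enabled by \cref{lemma:local_optimality_in_charts} and \cref{remark:CQs}.
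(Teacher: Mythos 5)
Your proposal is correct, but it takes a more uniform route than the paper. You reduce all four statements to their Euclidean counterparts through a single chart: the key observation that $\Lambda(f;\bp)$ coincides with the multiplier set of the transcribed problem \eqref{eq:nlp_in_charts} (same $(\mu,\lambda)$, via the chain rule and the isomorphism $(\d\varphi)(\bp)$), combined with \cref{lemma:local_optimality_in_charts} and \cref{remark:CQs}, lets you quote Gould--Tolle for (a), Gauvin for (b) and (c), and Wachsmuth for (d). The paper instead mixes strategies: for the converse of (a) it also works through a chart (pushing an element of $\tangentconeM{\Omega}{\bp}^\circ$ forward, invoking the construction in \cite[Thm.~6.3.2]{BazaraaShetty1976}, and pulling the resulting objective back as $r\circ\varphi$), but for (b) and (c) it gives a direct, intrinsic argument on $\MM$ via the linear-programming/positive-linear-independence characterization \eqref{eq:MFCQ_as_positive_linear_independence} of MFCQ, following \cite{Burke2014:1}, rather than appealing to the Euclidean theorem; for (d) both arguments coincide. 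What your approach buys is economy and a clean statement of the transfer principle; what the paper's buys is a proof of (b), (c) that exhibits the mechanism (unbounded multiplier sequences normalize to a nonzero solution of \eqref{eq:MFCQ_as_positive_linear_independence}, and conversely such a solution generates an unbounded ray in $\Lambda(f;\bp)$) without leaving the manifold formulation. Your bump-function step is in fact slightly more careful than the paper: since $\FF(\bp)$ consists of functions in $C^1(\MM,\R)$, the paper's pulled-back objective $r\circ\varphi$ is only defined on $U$, whereas your cut-off $f=\chi\cdot(F\circ\varphi)$ is genuinely global; the only small adjustment needed is to require $\operatorname{supp}\chi$ to lie in $\varphi^{-1}(W)$, where $W$ is the neighborhood of $\varphi(\bp)$ on which $F$ is defined (shrink the chart if necessary), after which the argument goes through as you describe.
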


\begin{proof}
	\cref{item:GCQ_is_equivalent_to_nonempty_set_of_multipliers}:
	\cref{theorem:first-order_necessary_optimality_conditions_M} shows that \hyperref[item:GCQ]{(GCQ)} implies $\Lambda(f;\bp) \neq \emptyset$ for any $f \in \FF(\bp)$.
	The converse is proved in \cite[Sec.~4]{GouldTolle1971} for the case $\MM = \R^n$; see also \cite[Thm.~6.3.2]{BazaraaShetty1976}.
	In order to utilize this result directly and to avoid stating an analogous one on $\MM$, we temporarily depart from our standing principle of minimizing the use of charts.
	Suppose that $(\d s)(\bp) \in \tangentconeM{\Omega}{\bp}^\circ \subseteq \cotangentspaceM{\bp}^{\circ}$ holds.
	Fix an arbitrary chart $(U,\varphi)$ about $\bp$.
	Suppose that $d$ is an arbitrary element from the tangent cone $\tangentconeRn{\varphi(U \cap \Omega)}{\varphi(\bp)}$.
	Then we can construct, as in the proof of \cref{lemma:comparison_of_tangent_cone_on_M_with_tangent_cone_through_chart}, the curve $\gamma(t) \coloneqq \varphi^{-1}(\varphi(\bp) + t \, d)$ so that $[\dot \gamma(0)] \in \tangentconeM{\Omega}{\bp}$ and $d = \bigh(){(\d \varphi)(\bp)} [\dot \gamma(0)]$ holds.
		We obtain
		\begin{equation*}
			0
			\ge
			(\d s)(\bp) \, [\dot \gamma(0)]
			=
			\restr{\frac{\d}{\dt} (s \circ \varphi^{-1} \circ \varphi \circ \gamma)}{t=0}
			=
			(s \circ \varphi^{-1})'(\varphi(\bp)) \, d
		\end{equation*}
		from $\tangentconeM{\Omega}{\bp}^\circ$, \cref{definition:differential}, the chain rule and the definition of $\gamma$.
	This shows $(s \circ \varphi^{-1})'(\varphi(\bp)) \in \tangentconeRn{\varphi(U \cap \Omega)}{\varphi(\bp)}^\circ$.

	Using \cite[Thm.~6.3.2]{BazaraaShetty1976} we can construct a $C^1$-function $r\colon\R^n \to \R$ such that $r'(\varphi(\bp)) = - (s \circ \varphi^{-1})'(\varphi(\bp))$ holds and $\varphi(\bp)$ is a local minimizer of \eqref{eq:nlp_in_charts} but with the objective $r$ in place of $(f \circ \varphi^{-1})$.
	By \cref{lemma:local_optimality_in_charts}, $\bp$ is a local minimizer of \eqref{eq:nlp} with objective $r \circ \varphi$.
	By assumption, $\Lambda(r \circ \varphi,\bp)$ is non-empty, i.e., there exist Lagrange multipliers $\mu$ and $\lambda$ such that
	\begin{equation*}
		(\d (r \circ \varphi))(\bp) + \mu \, (\d g)(\bp) + \lambda \, (\d h)(\bp) = 0
	\end{equation*}
	and \eqref{eq:KKT_conditions_2_M}, \eqref{eq:KKT_conditions_3_M} hold.
	In other words, $- (\d (r \circ \varphi))(\bp) \in \linearizingconeM{\Omega}{\bp}^\circ$, see \eqref{eq:negative_derivative_belongs_to_polar_of_linearizing_cone_M}.
	Moreover, the differentials of $r \circ \varphi$ and $-s$ at $\bp$ coincide since
	\begingroup
	\allowdisplaybreaks[4]
	\begin{align*}
		\MoveEqLeft
		(\d (r \circ \varphi))(\bp) \, [\dot \gamma(0)]
		\\
		&
		=
		[\dot \gamma(0)](r \circ \varphi)
		&
		&
		\text{by definition \eqref{eq:differential_of_a_real-valued_function} of the differential}
		\\
		&
		=
		\restr{\frac{\d}{\dt} (r \circ \varphi \circ \gamma)(t)}{t=0}
		&
		&
		\text{by definition \eqref{eq:tangent_vector_generated_by_a_curve} of a tangent vector}
		\\
		&
		=
		r'(x_0) \restr{\frac{\d}{\dt} (\varphi \circ \gamma)(t)}{t=0}
		&
		&
		\text{by the chain rule}
		\\
		&
		=
		- (s \circ \varphi^{-1})'(x_0) \restr{\frac{\d}{\dt} (\varphi \circ \gamma)(t)}{t=0}
		&
		&
		\text{by construction of $r$}
		\\
		&
		=
		- \restr{\frac{\d}{\dt}(s \circ \gamma)(t)}{t=0}
		&
		&
		\text{by the chain rule}
		\\
		&
		=
		- (\d s)(\bp) \, [\dot \gamma(0)]
		&
		&
		\text{by \eqref{eq:tangent_vector_generated_by_a_curve}, \eqref{eq:differential_of_a_real-valued_function}}
	\end{align*}
	\endgroup
	holds for arbitrary tangent vectors $[\dot \gamma(0)]$ in $\tangentspaceM{\bp}$.
	This shows that $\tangentconeM{\Omega}{\bp}^\circ \subseteq$ $\linearizingconeM{\Omega}{\bp}^\circ$ holds, i.e., the \hyperref[item:GCQ]{(GCQ)} is satisfied.

	\cref{item:MFCQ_implies_compactness_of_set_of_multipliers} and \cref{item:compactness_of_set_of_multipliers_implies_MFCQ}:
	a possible proof of these results is based on linear programming arguments in the Lagrange multiplier space and thus it is directly applicable here as well.
	We sketch the proof following \cite{Burke2014:1} for the reader's convenience.
	One first observes that the existence of an MFCQ vector in \hyperref[item:MFCQ]{(MFCQ)} is equivalent to the feasibility of the linear program
	\begin{equation}
		\label{eq:MFCQ_as_an_LP_feasibility_problem}
		\left\{
			\quad
			\begin{aligned}
				\text{Minimize} \quad & 0 \text{ w.r.t.\ } [\dot \gamma(0)] \in \tangentspaceM{\bp},
				\\
				\text{s.t.} \quad 
				&
				(\d g^i)(\bp) [\dot \gamma(0)]
				\le
				-1
				&
				&
				\text{for all } i \in \AA(\bp)
				,
				\\
				\text{and} \quad 
				&
				(\d h^j)(\bp) [\dot \gamma(0)]
				=
				0
				&
				&
				\text{for all } j = 1, \ldots, q
				.
			\end{aligned}
		\right.
	\end{equation}
	Using strong duality, one shows that \hyperref[item:MFCQ]{(MFCQ)} holds if and only if $\{ (\d h^j)(\bp) \}_{j=1}^q$ is linearly independent and 
	\begin{equation}
		\label{eq:MFCQ_as_positive_linear_independence}
		\begin{aligned}
			&
			\mu \, (\d g)(\bp) + \lambda \, (\d h)(\bp) = 0,
			\\
			&
			\mu_i \ge 0 \quad \text{for all } i \in \AA(\bp),
			\\
			&
			\mu_i = 0 \quad \text{for all } i \in \II(\bp),
		\end{aligned}
	\end{equation}
	has the only solution $(\mu,\lambda) = 0$.

	Now if $f \in \FF(\bp)$ holds and $\Lambda(f;\bp)$ is not bounded, then there exists a 
	sequence of Lagrange multipliers $(\mu^{(k)},\lambda^{(k)})$ whose Euclidan norm $\abs{(\mu^{(k)},\lambda^{(k)})}_2$ diverges to $\infty$. 
	Consequently, there exists a subsequence (which we do not re-label) such that $(\mu^{(k)},\lambda^{(k)})/\abs{(\mu^{(k)},\lambda^{(k)})}_2$ converges to some $(\mu,\lambda) \neq 0$.
	Exploiting the KKT conditions \eqref{eq:KKT_conditions_M} for $(\mu^{(k)},\lambda^{(k)})$ it follows that \eqref{eq:MFCQ_as_positive_linear_independence} holds.
	Consequently,
	\hyperref[item:MFCQ]{(MFCQ)} is violated.
	This shows \cref{item:MFCQ_implies_compactness_of_set_of_multipliers}.

	Conversely, if \hyperref[item:MFCQ]{(MFCQ)} does not hold, then there exists a non-zero vector $(\mu,\lambda)$ satisfying \eqref{eq:MFCQ_as_positive_linear_independence}.
	When $(\mu_0,\lambda_0) \in \Lambda(f;\bp)$, then $(\mu_0,\lambda_0) + t \, (\mu,\lambda)$ belongs to $\Lambda(f;\bp)$ as well for any $t \ge 0$, hence $\Lambda(f;\bp)$ is not compact.
	This confirms \cref{item:compactness_of_set_of_multipliers_implies_MFCQ}.

	\cref{item:LICQ_is_equivalent_to_singleton_set_of_multipliers}:
	We have proved in \cref{sec:KKT_and_CQs} that \hyperref[item:LICQ]{(LICQ)} implies \hyperref[item:GCQ]{(GCQ)}, so $\Lambda(f;\bp)$ is non-empty.
	The uniqueness of the Lagrange multipliers then follows immediately from \eqref{eq:KKT_conditions_1_M}.
	The converse statement is proved in \cite[Thm.~2]{Wachsmuth2012:1}, which applies without changes.
\end{proof}

\section{Numerical Example}
\label{sec:Example}

In this section we present a numerical example in which the fulfillment of the KKT conditions \eqref{eq:KKT_conditions_M} is used as an algorithmic stopping criterion.
While the framework of a smooth manifold was sufficient for the discussion of first-order optimality conditions, we require more structure for algorithmic purposes.
Therefore we restrict the following discussion to complete Riemannian manifolds.
In this section we denote tangent vectors by the symbol $\xi$ instead of $[\dot \gamma(0)]$.

A manifold is Riemannian if its tangent spaces are equipped with a smoothly varying metric $\dual{\cdot}{\cdot}_{\bp}$.
This allows the conversion of the differential of the objective~$f$, $(\d f)(\bp) \in \cotangentspaceM{\bp}$, to the gradient~$\nabla f(\bp) \in \tangentspaceM{\bp}$, which fulfills
\begin{align*}
	\dual{\xi}{\nabla f(\bp)}_{\bp},
	=
	(\d f)(\bp) \, \xi
	\quad
	\text{for all }
	\xi\in\tangentspaceM{\bp}.
\end{align*}
Completeness of a Riemannian manifold refers to the fact that geodesics emanating from any point $\bp \in \MM$ in the direction of an arbitrary tangent vector $\xi$ exist for all time $t \in \R$.

The Riemannian center of mass, also known as (Riemannian) mean was introduced in~\cite{Karcher1977} as a variational model.
Given a set of points $\bd_i$, $i=1,\ldots, N$, their Riemannian center is defined as the minimizer of
\begin{align}
	\label{eq:ConstrCM_objective_Riemannian_center_of_mass}
	f(\bp) \coloneqq \frac{1}{N}\sum_{i=1}^N d^2_{\MM}(\bp,\bd_i),
\end{align}
where $d_{\MM}\colon \MM\times\MM\to\R$ is the distance on the Riemannian
manifold $\MM$.

We extend this classical optimization problem on manifolds by adding the constraint that the minimizer should lie within a given ball of radius $r > 0$ and center $\bc \in \MM$.
We obtain the following constrained minimization problem of the form \eqref{eq:nlp},
\begin{equation}
	\label{eq:ConstrCM_nlpExample1}
	\left\{
		\quad
		\begin{aligned}
			\text{Minimize} \quad & f(\bp) \text{ w.r.t.\ } \bp \in \MM, \\
			\text{s.t.} \quad & d^2_{\MM}(\bp,\bc) - r^2 \le 0,
		\end{aligned}
	\right.
\end{equation}
with associated Lagrangian
\begin{align}
	\LL(\bp,\mu) = \frac{1}{N}\sum_{i=1}^N d^2_{\MM}(\bp,\bd_i) + \mu \, (d^2_{\MM}(\bp,\bc) - r^2).
\end{align}
It can be shown, see for example~\cite{Bacak2014,AfsariTronVidal2013}, that the
objective and the constraint are $C^1$-functions whose gradients are given by
the tangent vectors
\begin{equation}
	\label{eq:ConstrCM_gradients_of_objective_and_constraint}
	\nabla f(\bp)
	=
	- \frac{2}{N}\sum_{i=1}^N \log_{\bp}\bd_i
	\quad \text{and} \quad
	\nabla g(\bp)
	=
	-2 \log_{\bp}\bc
	.
\end{equation}
Here $\log$ denotes the logarithmic (or inverse exponential) map on $\MM$.
In other words, $\log_{\bp}\br \in \tangentspaceM{\bp}$ is the initial velocity of the geodesic curve starting in $\bp \in \MM$ which reaches $\br \in \MM$ at time~1.

In view of \eqref{eq:ConstrCM_gradients_of_objective_and_constraint}, the KKT conditions \eqref{eq:KKT_conditions_M} become
\begin{align*}
	&
	0
	=
	(\d\LL)(\bp,\mu)[\xi]
	=
	\frac{1}{N}\sum_{i=1}^N \dual{\xi}{-2\log_{\bp}\bd_i}_{\bp}
	+
	\mu \, \dual{\xi}{-2\log_{\bp}\bc}_{\bp}
	\quad
	\text{for all }\xi\in\tangentspaceM{\bp}
	\\
	&
	\mu
	\geq 0,
	\quad
	d^2_{\MM}(\bp,\bc)
	\leq r^2,
	\quad
	\mu \, (d^2_{\MM}(\bp,\bc) - r^2) = 0
	.
\end{align*}

\begin{figure}\centering
	\begin{subfigure}[t]{.49\textwidth}
		\includegraphics[width=.95\textwidth]{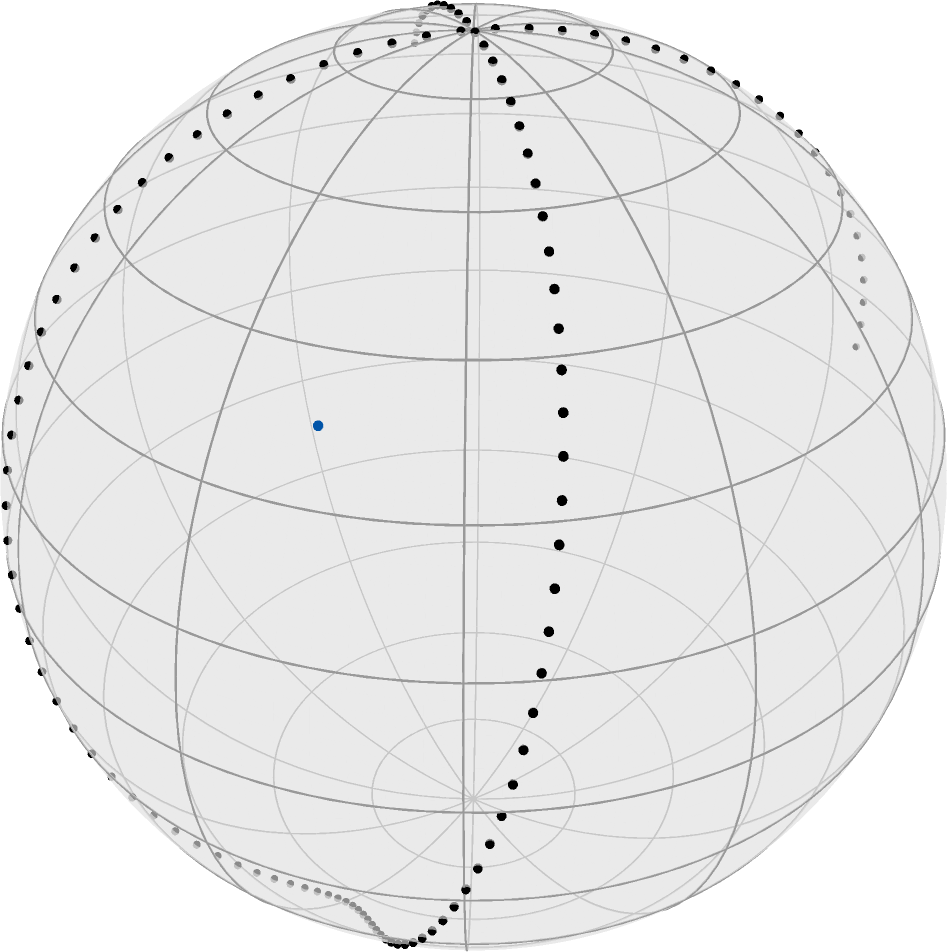}
		\caption{Data points $\bd_i$ and their mean $\bar \bp$, the (unconstrained) Riemannian center of mass.}
		\label{subfig:ConstrCM:S2:Data}
	\end{subfigure}
	\\
	\begin{subfigure}[t]{.49\textwidth}
		\includegraphics[width=.95\textwidth]{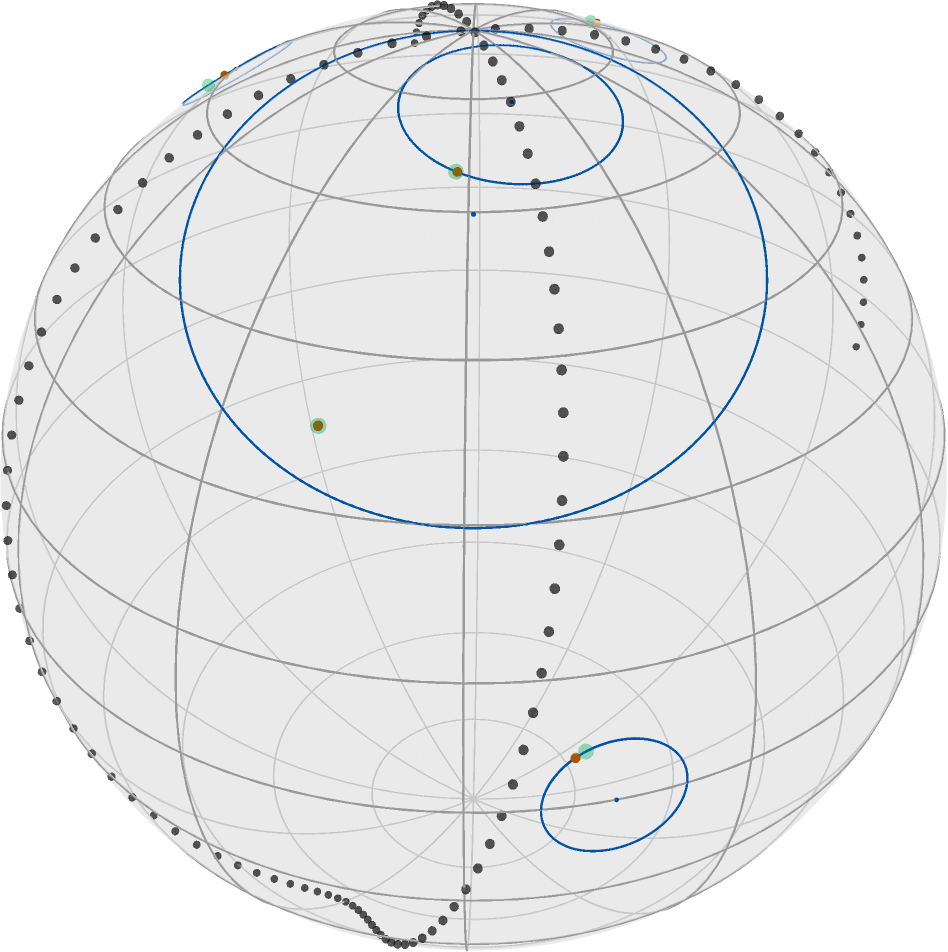}
		\caption{Constrained solutions of \eqref{eq:ConstrCM_nlpExample1} (light green) and projected unconstrained means $\proj_{\Omega}(\bar \bp)$ (orange)
		for five different feasible sets (blue).}
		\label{subfig:ConstrCM:S2:View1}
	\end{subfigure}
	\hfill
	\begin{subfigure}[t]{.49\textwidth}
		\includegraphics[width=.95\textwidth]{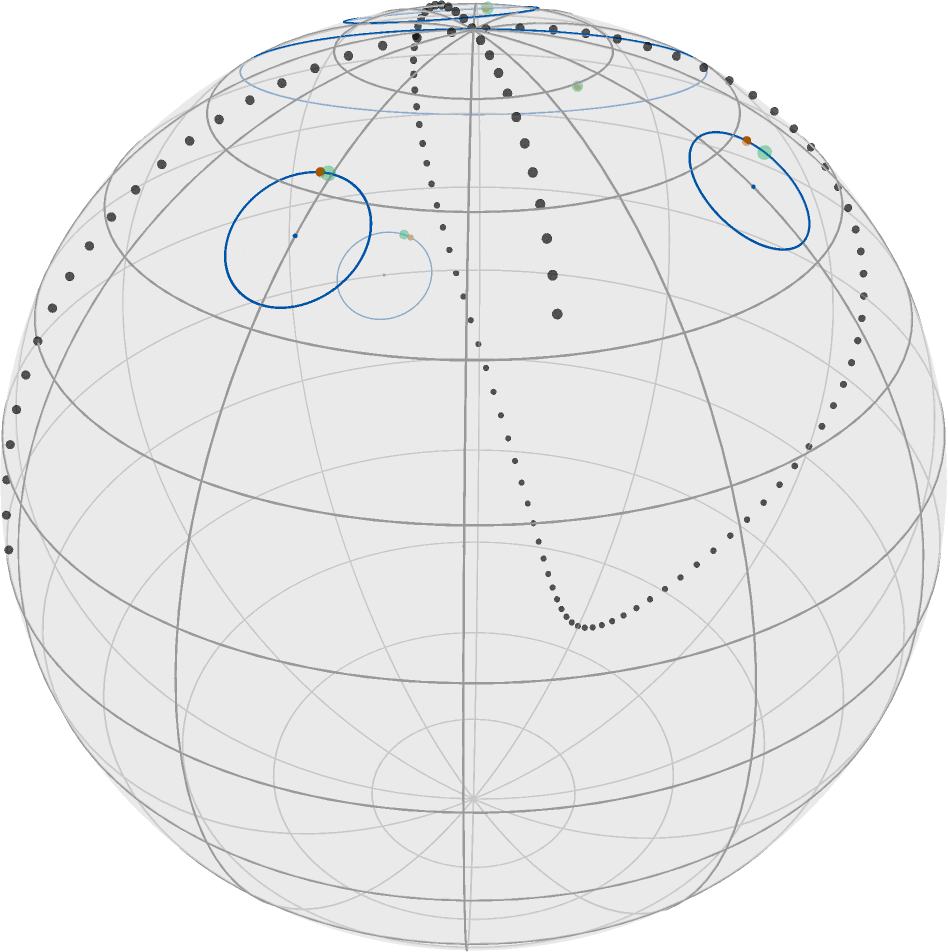}
		\caption{Same as \cref{subfig:ConstrCM:S2:View1}, rotated by 180 degrees.}
		\label{subfig:ConstrCM:S2:View2}
	\end{subfigure}
	\caption{Constrained centers of mass for five different feasible sets (centers and boundaries of the feasible sets shown in blue).
	Unlike in $\R^2$, the minimizers $\bp^*$ (light green) differ from the mean $\bar \bp$ projected onto the feasible set \eqref{eq:ConstrCM_projecting_the_unconstrained_mean} (orange).}
	\label{fig:ConstrCM:S2}
\end{figure}
In our example we choose $\MM = \mathbb S^2 \coloneqq \{\bp \in \R^3 : \lvert\bp\rvert_2 = 1 \}$ the two-dimensional manifold of unit vectors in $\R^3$ or $2$-sphere. We further have to restrict the data to not include antipodal points, i.e.~the case that for some $i,j\in\{1,\ldots,N\}$ it holds $\bd_i=-\bd_j$ is excluded.
The Riemannian metric is inherited from the ambient space $\R^3$.
Since the feasible set
\begin{equation}
	\label{eq:ConstrCM_feasible_set}
	\Omega
	\coloneqq
	\{ \bp \in \mathbb S^2: d_{\MM}(\bp,\bc) \le r \}
\end{equation}
is compact, a global minimizer to \eqref{eq:ConstrCM_nlpExample1} exists. Notice, however, that unlike in the flat space $\R^2$, minimizers are not necessarily unique.

Even in the absence of convexity, the LICQ is satisfied at every solution $\bp^*$ unless $\bp^* = \bc$ holds, which is equivalent to the unconstrained mean $\bar \bp$ coinciding with the center $\bc$ of the feasible set.
This does not happen for the data we use.
Consequently, the Lagrange multiplier is unique by \cref{theorem:Lagrange_multipliers_and_CQs}.

In our example, we choose a set of $N = 120$ data points $\bd_i$ as shown in \cref{subfig:ConstrCM:S2:Data}.
Their unconstrained Riemannian center of mass $\bar \bp$ is shown in blue.
We solve five variants of problem \eqref{eq:ConstrCM_nlpExample1} which differ w.r.t.\ the centers $\bc_i$ and radii $r_i$ of the feasible sets $\Omega_i$.
The boundaries $\partial \Omega_i$ of the feasible sets, which are spherical caps,  are displayed in blue in \cref{subfig:ConstrCM:S2:View1} (front view) and \cref{subfig:ConstrCM:S2:View2} (back view).
The constrained solutions $\bp^*_i$ are shown in light green in \cref{subfig:ConstrCM:S2:View1,subfig:ConstrCM:S2:View2}.

Each instance of problem \eqref{eq:ConstrCM_nlpExample1} is solved using
a projected gradient descent method.
Since it is a rather straightforward generalization of an unconstrained
gradient algorithm, see for instance~\cite[Ch.~4, Alg.~1]{AbsilMahonySepulchre2008}, we only briefly sketch it here.
We utilize the fact that the feasible set $\Omega$ is closed and geodesically
convex when $r<\pi/2$, i.e., for any two points $\bp,\bq\in\Omega$,
all (shortest) geodesics
connecting these two points lie inside $\Omega$.
In this case the projection~$\proj_\Omega\colon\MM\to\Omega$ onto $\Omega$ is defined by
\begin{align*}
	\proj_\Omega(\bp)\coloneqq\argmin_{\bq\in\Omega} d_\MM(\bp,\bq).
\end{align*}
It can be computed in closed form, namely
\begin{align}
	\label{eq:ConstrCM_projecting_the_unconstrained_mean}
	\proj_{\Omega}(\bp) =
	\exp_{\bc}\bigl(b\log_{\bc}\bp\bigr),
	\quad \text{where }
	b = \min\Bigl\{\frac{r}{d_{\MM}(\bp,\bc)},1\Bigr\}
	,
\end{align}
whenever the logarithmic map is uniquely determined.
This in turn holds whenever $\bp\neq-\bc$.
\begin{algorithm}[tbp]
	\caption{Projected gradient descent algorithm}
	\label{alg:projGradDesc}
	\begin{algorithmic}
		\Require{%
				an objective function $f\colon\MM\to\R$; a closed and convex set $\Omega$;
				a fixed step size~$s>0$;
				and an initial value $\bp^{(0)}\in\MM$
			}
		\State $k \gets 0$
		\Repeat
		\State $ \bp^{(k+1)} \gets \proj_\Omega\bigl( \exp_{\bp^{(k)}}(-s \nabla f(\bp^{(k)}))\bigr)$
		\State $k \gets k+1$
		\Until a convergence criterion is reached
		\State\Return $\bp^* = \bp^{(k)}$
	\end{algorithmic}
\end{algorithm}

The projected gradient descent algorithm is given as pseudo code in \cref{alg:projGradDesc}.
The unconstrained problem with solution $\bar \bp$ is solved similarly, omitting the projection step.
This amounts to the classical gradient descent method on manifolds as given in \cite[Ch.~4, Alg.~1]{AbsilMahonySepulchre2008}.
In our experiments we set the step size to $s=\frac{1}{2}$
and used the
first data point as initial data $\bp^{(0)}=\bd_1$,
which is the \eqq{bottom left} data point in~\cref{subfig:ConstrCM:S2:View2},
to solve the constrained instances.
The algorithm was implemented within the Manifold-valued Image Restauration
Toolbox (MVIRT)%
\footnote{available open source
at~\url{http://ronnybergmann.net/mvirt/}.}
\cite{Bergmann2017}, providing a direct access to the necessary functions for
the manifold of interest and the required algorithms.

Notice that in $\R^2$, the constrained mean of a set of points can simply be
obtained by projecting the unconstrained mean $\bar \bp$ onto the feasible disk.
In $\mathbb S^2$, this would amount to $\proj_{\Omega}(\bar \bp)$, but this differs, in general, from the solution of \eqref{eq:ConstrCM_nlpExample1} due to the curvature of $\mathbb S^2$.
For comparison, we show the result of $\proj_{\Omega}(\bar \bp)$ in orange in \cref{subfig:ConstrCM:S2:View1,subfig:ConstrCM:S2:View2}.

By design, gradient type methods do not utilize Lagrange multiplier estimates.
At an iterate $\bp^{(k)}$, we therefore estimate the Lagrange multiplier
$\mu^{(k)}$ by a least squares approach, which amounts to
\begin{equation}
	\label{eq:ConstrCM_Lagrange_multiplier_estimate}
	\mu^{(k)}
	\coloneqq
	- \frac{\dual{\nabla g(\bp^{(k)})}{\nabla f(\bp^{(k)})}_{\bp^{(k)}}}{\dual{\nabla g(\bp^{(k)})}{\nabla g(\bp^{(k)})}_{\bp^{(k)}}}
	.
\end{equation}
We then evaluate the gradient of the Lagrangian,
\begin{equation}
	\label{eq:ConstrCM_gradient_of_Lagrangian}
	\nabla_{\bp} \LL(\bp^{(k)},\mu^{(k)})
	=
	-\frac{2}{N}\sum_{i=1}^N \log_{\bp^{(k)}} \bd_i
	-2 \, \mu^{(k)} \log_{\bp^{(k)}} \bc ,
\end{equation}
and utilize its norm squared $n^{(k)} \coloneqq \dual{\nabla_{\bp} \LL(\bp^{(k)},\mu^{(k)})}{\nabla_{\bp} \LL(\bp^{(k)},\mu^{(k)})}_{\bp^{(k)}}$ as a stopping criterion.

For two of the five test cases we display the iteration history in \cref{tab:iterates}.
The first example is the large circle with center $\bc_1 \approx (0.4319,0.2592,0.8639)\transp$ and radius $r_1=\frac{\pi}{6}$.
For this setup the constraint is inactive and $\bp^*_1 = \bar \bp$ holds.
The second example is shown to the right of~\cref{subfig:ConstrCM:S2:View2}, and it is given by~$\bc_2 \approx (0,-0.5735,0.8192)\transp$ and $r_2 = \frac{\pi}{24}$.
For this and remaining three cases the constraint is active.

\begin{table}\centering
	\caption{Iteration history of \cref{alg:projGradDesc} for two instances of problem \eqref{eq:ConstrCM_nlpExample1}.}
	\label{tab:iterates}
	\sisetup{detect-family,detect-display-math = true }
	\begin{subtable}[t]{.545\textwidth}
		\subcaption*{Results for $(\bc_1,r_1)$.}
		\begin{tabular}{@{}S[table-format=2.0]S[table-format=1.3]*2{S[table-format=-1.2e+2]}@{}}\toprule
			{$k$} & {$f(\bp^{(k)})$} & {$n^{(k)}$} & {$\mu^{(k)}$}\\\midrule
			1 & 1.9129 & 0.6540 & 1.1722\\
			2 & 1.4172 & 0.1243 & 0.2755\\
			3 & 1.3754 & 0.0169 & -0.0847\\
			4 & 1.3695 & 0.0029 & -0.0811\\
			5 & 1.3684 & 0.0005 & -0.0403\\
			6 & 1.3682 & 0.0001 & -0.0180\\
			7 & 1.3682 & 1.18e-5 & -0.0078\\
			8 & 1.3682 & 3.26e-6 & -0.0034\\
			9 & 1.3682 & 6.02e-7 & -0.0014\\
			10 & 1.3682 & 1.11e-7 & -0.0006\\
			11 & 1.3682 & 2.05e-8 & -0.0003\\
			12 & 1.3682 & 3.79e-9 & -0.0001\\
			13 & 1.3682 & 6.99e-10 & -4.94e-5\\
			14 & 1.3682 & 1.29e-10 & -2.12e-5\\
			15 & 1.3682 & 2.38e-11 & -9.13e-6\\
			16 & 1.3682 & 4.40e-12 & -3.93e-6\\
			17 & 1.3682 & 8.13e-13 & -1.69e-6\\
			18 & 1.3682 & 1.50e-13 & -7.25e-7\\
			19 & 1.3682 & 2.77e-14 & -3.11e-7\\
			20 & 1.3682 & 5.12e-15 & -1.34e-7\\
			21 & 1.3682 & 9.45e-16 & -5.75e-8\\
			22 & 1.3682 & 1.74e-16 & -2.47e-8\\\bottomrule
		\end{tabular}
	\end{subtable}
	\begin{subtable}[t]{.445\textwidth}
		\subcaption*{Results for $(\bc_2,r_2)$.}
		\begin{tabular}{@{}S[table-format=1.0]S[table-format=1.4]S[table-format=1.2e+2]S[table-format=1.4]@{}}\toprule
			{$k$} & {$f(\bp^{(k)})$} & {$n^{(k)}$} & {$\mu^{(k)}$}\\\midrule
			1 & 2.2190 & 2.1771 & 1.3833\\
			2 & 2.0215 & 0.0011 & 1.2454\\
			3 & 2.0214 & 5.04e-6 & 1.2475\\
			4 & 2.0214 & 2.40e-8 & 1.2476\\
			5 & 2.0214 & 1.15e-10 & 1.2477\\
			6 & 2.0214 & 5.50e-12 & 1.2477\\
			7 & 2.0214 & 2.63e-15 & 1.2477\\
			8 & 2.0214 & 1.25e-17 & 1.2477\\\bottomrule
		\end{tabular}
	\end{subtable}
\end{table}

Since the unconstrained Riemannian mean is within the feasible set for the first
example of $(\bc_1,r_1)$, the projection is the identity after the first
iteration. Hence for this case, the (projected) gradient descent algorithm
computes the unconstrained mean similar to~\cite{AfsariTronVidal2013}.
We obtain $\bp^*_1 = \bar\bp =\proj_{\Omega_{1}}(\bar \bp)$.
Looking at the gradients~$\nabla f$ 
and $\nabla g$ we see, cf.~\cref{subfig:ConstrCM:S2:grads1},
that $\nabla f=0$ 
while the constraint function $g$ yields a gradient
pointing towards the boundary $\partial\Omega_{1}$ of the feasible set. Clearly, the optimal Lagrange multiplier is zero in this case.
The iterates (green points) follow a typical
gradient descent path of a Riemannian center of mass computation.
Notice that the Lagrange multiplier happens to approach zero from below
in this case.
While the objective decreases, the distance from $\bc_1$, and thus $g$
increases, leading to a negative multiplier estimate $\mu^{(k)}$.

For the second case, $(\bc_2,r_2)$ the unconstrained mean lies outside the
feasible set, and the constraint $g$ is strongly active, i.e., the multiplier $\mu$ is strictly positive.
As we mentioned earlier,  the optimal solution $\bp^*_{2}$ 
is different from $\proj_{\Omega_{2}}(\bar\bp)$, their distance
is \SI{0.0409}, which is due to the curvature of the manifold.

\begin{figure}\centering
	\begin{subfigure}[t]{.49\textwidth}
		\includegraphics[width=.9233\textwidth]{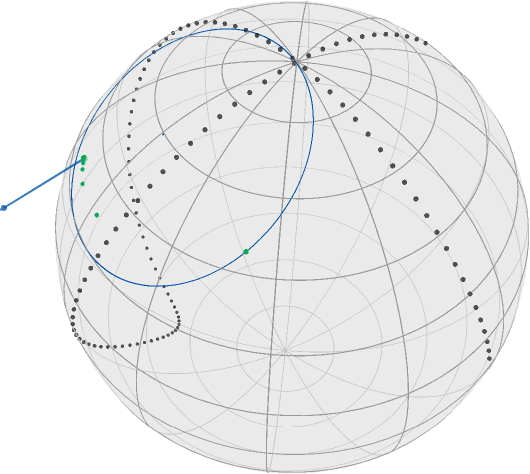}
		\caption{Constraint data $(\bc_1,r_1)$.}
		\label{subfig:ConstrCM:S2:grads1}
	\end{subfigure}
	\begin{subfigure}[t]{.49\textwidth}
		\includegraphics[width=\textwidth]{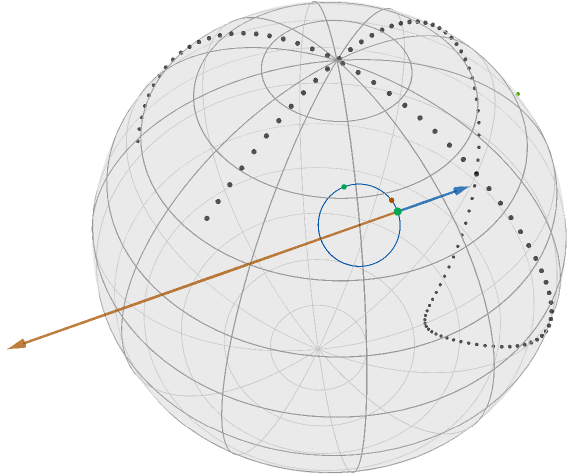}
		\caption{Constraint data $(\bc_2,r_2)$.}
		\label{subfig:ConstrCM:S2:grads2}
	\end{subfigure}
	\caption{Iterates (green) of the projected gradient method and the final gradients
	of the objective $f$ (orange) as well as the constraint $g$ (blue).}
\end{figure}

\section*{Acknowledgments}

We would like to thank two anonymous reviewers for their constructive comments which helped to significantly improve the manuscript.

\printbibliography
\end{document}